\numberwithin{equation}{section}
\declaretheoremstyle[
  bodyfont=\normalfont\itshape,
  headformat=\NAME\ \NUMBER\NOTE,
]{myplain}
\declaretheoremstyle[
  headformat=\NAME\ \NUMBER\NOTE,
]{mydefinition}
\newcommand{\envqed}{{\lower-0.3ex\hbox{$\triangleleft$}}}
\declaretheorem[style=myplain,numberwithin=section]{theorem}
\declaretheorem[style=mydefinition,numberlike=theorem,qed=\envqed]{definition}
\declaretheorem[style=mydefinition,numberlike=theorem,qed=\envqed]{remark}
\let\epsilon\varepsilon
\let\phi\varphi
\let\rho\varrho
\newcommand{\uBBM}{\eta}
\renewcommand{\vec}[1]{\underline{#1}}
\newcommand{\I}{\operatorname{I}}
\newcommand{\JBBM}{\mathcal{J}_\mathrm{BBM}}
\newcommand{\HBBM}{\mathcal{H}_\mathrm{BBM}}
\newcommand{\IBBM}{\mathcal{I}_\mathrm{BBM}}
\newcommand{\LBBM}{\mathcal{L}_\mathrm{BBM}}
\newcommand{\JBBMH}{\mathcal{J}_\mathrm{BBMH}}
\newcommand{\HBBMH}{\mathcal{H}_\mathrm{BBMH}}
\newcommand{\IBBMH}{\mathcal{I}_\mathrm{BBMH}}
\newcommand{\LBBMH}{\mathcal{L}_\mathrm{BBMH}}
\newcommand{\ut}{\tilde{u}}
\newcommand{\vt}{\tilde{v}}
\newcommand{\wt}{\tilde{w}}
\newcommand{\tIAex}{\tilde{A}}
\newcommand{\tIbex}{\pmb{\tilde{b}}}
\newcommand{\tIAim}{A}
\newcommand{\tIbim}{\pmb{b}}
\newcommand{\orcid}[1]{ORCID:~\href{https://orcid.org/#1}{#1}}
\newenvironment{keywords}{\par\textbf{Key words.}}{\par}
\newenvironment{AMS}{\par\textbf{AMS subject classification.}}{\par}
\newcommand{\eps}{\varepsilon}
\title{Asymptotic-preserving and energy-conserving methods for a hyperbolic approximation of the BBM equation}
\author[1]{Sebastian~Bleecke}
\affil[1]{Institute of Mathematics, Johannes Gutenberg University Mainz, Staudingerweg 9, 55128 Mainz, Germany}
\author[2]{Abhijit Biswas\thanks{\orcid{0000-0002-0741-6303}}}
\affil[2]{Department of Mathematics and Statistics, Indian Institute of Technology Kanpur, Kanpur, India}
\author[3]{David~I.~Ketcheson\thanks{\orcid{0000-0002-1212-126X}}}
\affil[3]{King Abdullah University of Science and Technology (KAUST),
Computer Electrical and Mathematical Science and Engineering Division (CEMSE),
Thuwal, 23955-6900, Saudi Arabia}
\author[1]{Hendrik~Ranocha\thanks{\orcid{0000-0002-3456-2277}}}
\author[4]{Jochen Schütz\thanks{\orcid{0000-0002-6355-9130}}}
\affil[4]{Faculty of Sciences \& Data Science Institute, Hasselt University, Agoralaan Gebouw D, BE-3590 Diepenbeek, Belgium}
\date{November 11, 2025} 
\begin{document}

\maketitle

\begin{abstract}
\noindent
  We study the hyperbolic approximation of the Benjamin-Bona-Mahony (BBM) equation
proposed recently by Gavrilyuk and Shyue (2022).
We develop asymptotic-preserving numerical methods using implicit-explicit
(additive) Runge-Kutta methods that are implicit in the stiff linear part.
The new discretization of the hyperbolization conserves important invariants
converging to invariants of the BBM equation.
We use the entropy relaxation approach to make the fully discrete schemes
energy-preserving.
Numerical experiments demonstrate the effectiveness of these discretizations.

\end{abstract}

\begin{keywords}
  hyperbolization,
  structure-preserving methods,
  summation-by-parts operators,
  implicit-explicit Runge-Kutta methods,
  asymptotic-preserving methods
\end{keywords}

\begin{AMS}
  65L04, 
  65M12, 
  65M06, 
  65M20, 
  65M70  
\end{AMS}

\section{Introduction}
The Benjamin-Bona-Mahony (BBM) equation~\cite{BBM1972}
\begin{equation}
\label{eq:BBM}
\uBBM_t + \uBBM \uBBM_x - \uBBM_{txx} = 0,
\end{equation}
originally introduced by Peregrine \cite{peregrine1966} and also known as the
regularized long wave equation, is a model for weakly nonlinear surface waves in shallow water,
and a prototype for many dispersive wave models\footnote{This equation was
originally written with an additional linear term $+ \uBBM_x$, which can be removed
by the transformation $\uBBM \mapsto \uBBM - 1$.}.

Numerical solution of the BBM equation \eqref{eq:BBM} can be challenging due to the
presence of both the nonlinear convective term and the mixed third-derivative term.
For example, it can be rewritten as
\begin{equation}
  (\I - \partial_x^2) \uBBM_t + \uBBM \uBBM_x = 0
\end{equation}
and solved by integrating the nonlinear term using traditional methods, while
performing an elliptic solve at each time step.

A first-order hyperbolic approximation for \eqref{eq:BBM}, referred to as BBMH,
was introduced in \cite{gavrilyuk2022hyperbolic}.  It comprises
a system of three equations involving a relaxation parameter
$\epsilon$, and the system is formally equivalent to the BBM equation when $\epsilon \to 0$.
Numerical solution of this hyperbolization may be an appealing alternative
due to the availability of standard numerical methods for hyperbolic problems,
the relative ease of imposing boundary conditions, and avoiding the need to perform an
elliptic solve.  Furthermore, the BBMH system has a Hamiltonian structure that
is closely related to that of the original BBM equation.
On the other hand, when $\epsilon$ is small the hyperbolic system
involves fast waves that may pose their own numerical challenge.

Similar hyperbolic approximations have been proposed for many other higher
order partial differential equations (PDEs); see e.g., \cite{favrie2017rapid,busto2021high,guermond2022hyperbolic,ketcheson2025approximation,giesselmann2025convergence}.
Improvements in our understanding of how to efficiently and accurately solve the resulting
hyperbolic systems is of high interest as it may benefit a wide range of applications.

In the present work we propose and analyze numerical methods for the solution
of the BBMH system, proposing a class of methods that are energy conserving, asymptotic preserving,
and computationally efficient.  We employ
summation-by-parts (SBP) operators in space and implicit-explicit
Runge-Kutta (IMEX RK) methods in time, with time relaxation to achieve fully-discrete
energy conservation.
We numerically demonstrate that the resulting schemes exhibit improved long-term error growth
compared to non-conservative methods, as expected based on the relative equilibrium structure of the equations \cite{duran1998numerical,araujo2001error}.

While we focus on finite differences \cite{kreiss1974finite,strand1994summation,carpenter1994time} in space, we formulate our methods
in terms of general SBP operators. Thus, the same structure-preserving properties
also hold for Fourier collocation methods,
finite volumes \cite{nordstrom2001finite},
continuous Galerkin~(CG) finite elements \cite{hicken2016multidimensional,hicken2020entropy,abgrall2020analysisI},
discontinuous Galerkin~(DG) methods \cite{gassner2013skew,carpenter2014entropy},
flux reconstruction~(FR) \cite{huynh2007flux,vincent2011newclass,ranocha2016summation},
active flux methods \cite{eymann2011active,barsukow2025stability},
and some meshless schemes \cite{hicken2024constructing}.
See \cite{svard2014review,fernandez2014review} for reviews of SBP methods.

The relaxation-in-time methods we use to obtain fully-discrete energy conservation
are small modifications of standard IMEX methods. The origin of these methods
dates back to \cite{sanzserna1982explicit} and \cite[pp.~265--266]{dekker1984stability}.
More recently, they have been developed in \cite{ketcheson2019relaxation,ranocha2020relaxation,ranocha2020general}.
Their combination with IMEX methods has been considered in \cite{kang2022entropy,li2022implicit}.
Besides Hamiltonian problems \cite{ranocha2020relaxationHamiltonian,zhang2020highly,li2023relaxation},
some applications include
compressible flows \cite{yan2020entropy,ranocha2020fully,doehring2025paired}
and dispersive wave equations \cite{ranocha2025structure,lampert2024structure,mitsotakis2021conservative}.

The rest of the paper is organized as follows.
In Section~\ref{sec:rel_eq} we introduce a hyperbolic approximation of the BBM
equation and show that it possesses a relative equilibrium structure
analogous to that of the BBM equation itself.
In Section \ref{sec:traveling_waves} we discuss traveling wave solutions of BBMH
and describe the generation of numerical solitary wave solutions via the Petviashvili method.
We show that appropriately chosen
IMEX RK discretizations of BBMH are indeed asymptotic preserving in Section~\ref{sec:continuous_in_space}.
In Section~\ref{sec:discrete_in_space} we develop a spatial discretization for BBMH
that exactly conserves the modified energy on a semi-discrete level, via the use of
SBP operators.
Finally, in Section~\ref{sec:numerical_experiments} we perform numerical tests
of the fully-discrete scheme, confirming our theoretical results and demonstrating
its efficiency.

\section{Relative equilibrium structure}\label{sec:rel_eq}

The BBM equation considered on the interval $[x_\mathrm{min}, x_\mathrm{max}]$ can be written as a Hamiltonian system \cite{Olver_1980}
\begin{equation}
  \uBBM_t = \JBBM \delta \HBBM(\uBBM),
\end{equation}
where $\JBBM$ is a skew-symmetric operator
\begin{equation}
  \JBBM = -(\I-\partial_x^2)^{-1} \partial_x,
\end{equation}
$\delta$ represents the variational derivative, and $\HBBM(\uBBM)$ is the Hamiltonian
\begin{equation}
  \HBBM(\uBBM) = \int_{x_\mathrm{min}}^{x_\mathrm{max}}\frac{\uBBM^3}{6} \dif x.
\end{equation}
The BBM equation possesses an additional conserved quantity
\begin{equation}\label{eq:I_BBM}
  \IBBM(\uBBM) = \int_{x_\mathrm{min}}^{x_\mathrm{max}}\left(\frac{\uBBM^2}{2} + \frac{\uBBM_x^2}{2}\right)  \dif x
\end{equation}
which satisfies
\begin{equation}
  \uBBM_x = \JBBM \delta \IBBM(\uBBM).
\end{equation}
For numerical methods conserving $\HBBM$ or $\IBBM$ for solitary wave solutions, this relative equilibrium structure leads to a reduced long-time error growth $O(t)$ instead of $O(t^2)$ \cite{duran1998numerical,araujo2001error}.

The BBM equation can also be written in terms of the Lagrangian \cite{gavrilyuk2022hyperbolic}
\begin{equation}
  \LBBM(\varphi) = -\frac{\varphi_t(\varphi_x - 1)}{2} + \frac{\varphi_t \varphi_{xxx}}{2} - \frac{\varphi_x^3 - 1}{6}.
\end{equation}
The Euler-Lagrange equation for $\LBBM$ is
\begin{equation}
  \begin{aligned}
    &\varphi_{xt} + \varphi_x \varphi_{xx} - \varphi_{xtxx} = 0.
  \end{aligned}
\end{equation}
Setting $\uBBM = \varphi_x$, this is equivalent to the BBM equation \eqref{eq:BBM}.
Based on this, Gavrilyuk and Shyue \cite{gavrilyuk2022hyperbolic}
introduced the modified Lagrangian
\begin{equation}
  \LBBMH(\phi,\psi, \chi) = -\frac{\phi_t \phi_x}{2} - \frac{\phi_x^3}{6} - \varepsilon^2 \frac{\psi_t \psi_x}{2} - \phi_x \psi_x + \psi_x \chi - \frac{\chi_t \chi_x}{2} - \varepsilon^2\frac{\chi_x^2}{2},
\end{equation}
for which the Euler-Lagrange equations read
\begin{equation}\label{eq:BBMH}
  \begin{aligned}
    u_t + u u_x + v_x &= 0,\\
    v_t + \frac{1}{\varepsilon^2} u_x &= \frac{1}{\varepsilon^2} w,\\
    w_t + \varepsilon^2 w_x &= -v,
  \end{aligned}
\end{equation}
where $u = \phi_x, v=\psi_x$, and $w=\chi_x$.
As shown in \cite{gavrilyuk2022hyperbolic}, \eqref{eq:BBMH} can be written
(for any $\epsilon>0$) as a symmetric hyperbolic system\footnote{Gavrilyuk and Shyue \cite{gavrilyuk2022hyperbolic}
introduced two parameters $c$ and $\widehat{c}$ in their Lagrangian such that the original
BBM equation is recovered for $c, \widehat{c} \to \infty$. We use
$c = \widehat{c} = 1/\varepsilon^2$ to simplify the analysis and for consistency with
the literature on asymptotic-preserving methods.}.
Formally, as $\epsilon \to 0$, \eqref{eq:BBMH} is equivalent to \eqref{eq:BBM},
and thus \eqref{eq:BBMH} can be viewed as a hyperbolic approximation of \eqref{eq:BBM}.
A rigorous convergence proof of the hyperbolic approximation to a smooth solution of the BBM equation is provided in \cite{giesselmann2025convergence}.
We refer to \eqref{eq:BBMH} as BBMH.
To write the BBMH system more compactly we will sometimes use the notation
\begin{equation}
  q = (u, v, w)^T.
\end{equation}
In Section \ref{sec:numerical_experiments} we will study the long-time behavior of the error in the
numerical solution of BBMH.  The nature of this behavior depends critically on
preservation of the \emph{relative equilibrium structure} of the equation
\cite{duran1998numerical}.
The relative equilibrium structure of the original BBM equation has been
described in \cite{araujo2001error}.

Since it is obtained from a Lagrangian, the BBMH system possesses a Hamiltonian structure.
Conservation laws for quantities analogous to those above are given in
\cite[Section~2, equations (14)--(15)]{gavrilyuk2022hyperbolic}.  The following
theorem provides the full relative equilibrium structure related to these
quantities.
\begin{theorem}\label{thm:relative_equilibrium_structure}
    The BBMH system~\eqref{eq:BBMH} is a Hamiltonian PDE
    $\partial_t q = \JBBMH \delta \HBBMH(q)$
    with relative equilibrium structure
    $\partial_x q = \JBBMH \delta \IBBMH(q)$,
    where
    \begin{equation}\label{eq:BBMH_total}
    \begin{gathered}
      \JBBMH= -\operatorname{diag}\left(\partial_x, \frac{1}{\eps^2}\partial_x, \partial_x\right),
      \quad
      \HBBMH(q) = \int_{x_\mathrm{min}}^{x_\mathrm{max}} \left(
        \frac{u^3}{6} + u v - v \chi + \epsilon^2 \frac{w^2}{2}
      \right) \dif x,
      \\
      \IBBMH(q) = \int_{x_\mathrm{min}}^{x_\mathrm{max}} \left(
        \frac{u^2}{2} + \epsilon^2 \frac{v^2}{2} + \frac{w^2}{2}
      \right) \dif x,
    \end{gathered}
    \end{equation}
    where we recall that $\chi_x = w$.
\end{theorem}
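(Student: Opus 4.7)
The plan is to verify the two identities in the theorem by direct variational calculation, under the standing assumption that boundary contributions vanish (e.g.\ periodic or sufficiently decaying boundary conditions on $[x_\mathrm{min}, x_\mathrm{max}]$), which is consistent with the Lagrangian derivation recalled above. Along the way it should also be noted that $\JBBMH$ is a genuine Poisson operator: it is constant-coefficient and skew-adjoint componentwise, so the Jacobi identity holds trivially.

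For the Hamiltonian identity $\partial_t q = \JBBMH \delta \HBBMH(q)$, I would first compute the three variational derivatives of $\HBBMH$. The variations with respect to $u$ and $v$ are immediate: $\delta_u \HBBMH = u^2/2 + v$ and $\delta_v \HBBMH = u - \chi$, since $\chi$ does not depend on $v$ (it is determined by $w$ via $\chi_x = w$). The delicate step is $\delta_w \HBBMH$: the term $-v\chi$ depends \emph{nonlocally} on $w$. Writing $\chi = \partial_x^{-1} w$ (with the mean-zero convention) and using the skew-adjointness of $\partial_x^{-1}$ under the chosen boundary conditions, this contribution equals $\partial_x^{-1} v$, so $\delta_w \HBBMH = \eps^2 w + \partial_x^{-1} v$. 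Applying $\JBBMH = -\diag(\partial_x, \partial_x/\eps^2, \partial_x)$ componentwise and simplifying via $\partial_x \chi = w$ and $\partial_x \partial_x^{-1} v = v$ then reproduces exactly the three equations of~\eqref{eq:BBMH}.

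For the relative equilibrium identity $\partial_x q = \JBBMH \delta \IBBMH(q)$ the computation is purely local: $\delta \IBBMH = (u, \eps^2 v, w)^T$, and applying $\JBBMH$ componentwise immediately yields $\partial_x q$ up to the same sign convention already used in the BBM check above the theorem.

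The hard part will be the treatment of the nonlocal $\chi$-dependence on $w$ inside $\HBBMH$, which is the only place where a nontrivial integration-by-parts argument really enters and pins down the class of boundary conditions under which the Hamiltonian structure is valid. Once $\delta_w \HBBMH$ has been correctly identified, the rest of the verification of both identities is mechanical substitution using $\chi_x = w$.
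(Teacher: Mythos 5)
The paper states this theorem without any proof, so your direct variational verification is exactly what is needed, and it is correct: the only nontrivial step is indeed $\delta_w \HBBMH = \eps^2 w + \partial_x^{-1} v$, obtained from the skew-adjointness of $\partial_x^{-1}$ on mean-zero periodic (or decaying) data, after which applying $\JBBMH$ reproduces \eqref{eq:BBMH} componentwise. One point you should not gloss over as ``the same sign convention'': your computation gives $\JBBMH \delta \IBBMH(q) = -\partial_x q$, not $+\partial_x q$, and the identical sign discrepancy is already present in the paper's BBM example (there $\JBBM \delta \IBBM(\uBBM) = -(\I-\partial_x^2)^{-1}\partial_x(\I-\partial_x^2)\uBBM = -\uBBM_x$), so the stated relative-equilibrium identity holds only up to an overall sign that the paper itself appears to have dropped consistently; this is harmless for the conservation and error-growth conclusions, but your write-up should state the identity you actually prove.
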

Note that formally, $\IBBMH(q) \to \IBBM(\uBBM)$ as $\epsilon \to 0$.

\begin{remark}
\label{rem:other_hyperbolic_approximation}
It is interesting to note that there exists another hyperbolic approximation
of the BBM equation, which amounts to
\begin{equation}
  \begin{aligned}
    u_t + uu_x - \frac{1}{\eps^2} (u_x - v) &= 0\\
    v_t + \frac{1}{\eps^2} (v_x - w) = 0\\
    w_t - \frac{1}{\eps^2} (u_x - v) = 0.
  \end{aligned}
\end{equation}
This choice of hyperbolization of the BBM equation also is linearly stable.
However, this system does not possess a Hamiltonian or relative equilibrium
structure, so in the present work we focus exclusively on \eqref{eq:BBMH}.
\end{remark}

\section{Traveling wave solutions}\label{sec:traveling_waves}
An analysis of some traveling wave solutions of the BBMH system has been performed in
\cite{gavrilyuk2022hyperbolic}, including both solitary waves and periodic waves.  As noted
there, there exist solitary wave solutions of BBMH that converge to solitary wave solutions
of BBM as $\epsilon \to 0$.  Taking a slightly different approach, we observe that the
traveling wave ansatz
\begin{align} \label{tw-ansatz}
    \bigl(u(x,t), v(x,t), w(x,t)\bigr) & = \bigl(\ut(x-ct), \vt(x-ct), \wt(x-ct)\bigr)
\end{align}
leads to a system of ODEs that can be reduced to the following pair of equations:
\begin{equation} \label{bbmh-traveling-odes}
    \ut' = \frac{\wt}{1 + c\epsilon^2(\ut-c)}, \qquad
    \wt' = \frac{\ut(\ut/2 - c)}{\epsilon^2-c}.
\end{equation}
Here we have assumed that $\ut$ and $\vt$ tend to zero asymptotically in
order to determine the constant of integration.
This system is very similar to that obtained previously for the KdVH
system \cite[eq.~(6)]{biswas2025traveling}, with the main difference being
that the denominator of the equation for $\wt'$ can vanish or become negative.
Analysis of the equilibria of this system shows that solitary waves with
vanishing background state exist if and only if $c>\epsilon^2$.  It is peculiar that,
whereas KdVH solitary waves are subject to a maximum speed, BBMH solitary
waves are subject to a minimum speed.  Note that as $\epsilon\to 0$, this
restriction becomes trivial, so there exist BBMH traveling waves that approximate
BBM solitary waves of any speed.

For $c < \epsilon^2$, there exist a variety of traveling wave solutions, including
both solitary and periodic waves, which do not correspond to any solution of the BBM equation.
These include analogs of all the types of waves shown in
\cite[Fig.~3]{biswas2025traveling}, as well as a family of peaked solitary waves, of
which an example is depicted in Figure \ref{fig:peakon}.

\begin{figure}[htbp]
  \centering
  \includegraphics{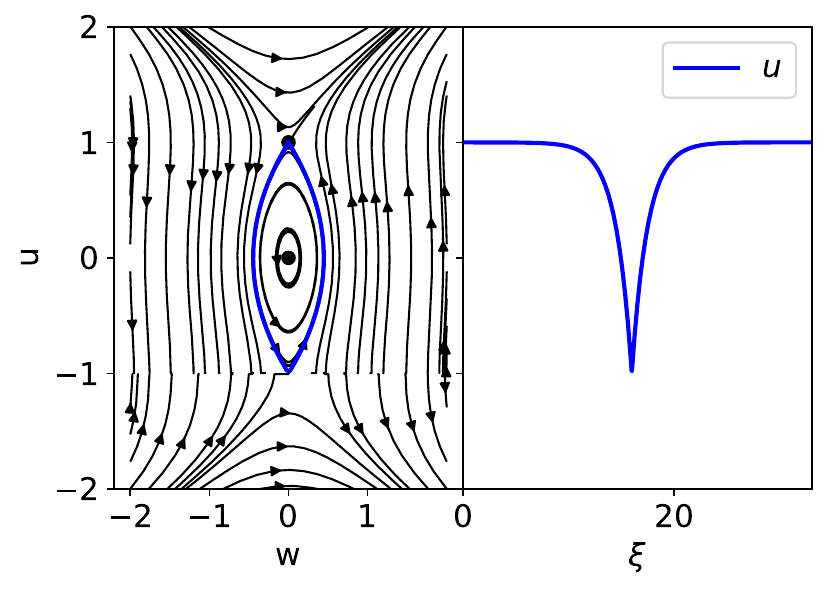}
  \caption{A peakon solution of the BBMH system, obtained by integrating \eqref{bbmh-traveling-odes} with $\epsilon^2=4/3$, $c=1/2$.}
  \label{fig:peakon}
\end{figure}

\subsection{Computation of BBMH solitary waves via Petviashvili's method}\label{sec:petv_derivation}
For some numerical tests in this work, we use numerically computed solitary
wave solutions of the BBMH system.  For completeness, we describe the calculation
of such solutions in this section.  For a more detailed analysis of BBMH solitary waves
and their convergence to BBM solitary waves, see \cite{gavrilyuk2022hyperbolic}.

Here we use the Petviashvili algorithm to construct traveling wave solutions numerically;
see e.g. \cite{ALVAREZ201439}.
It is convenient\footnote{In order to set the constants of integration to zero in what follows.}
to make the substitution $u\to u+1$ in \eqref{eq:BBMH} and work with the resulting
system
\begin{equation}\label{eq:petv_system}
    \begin{aligned}
      u_t + u_x + u u_x + v_x &= 0, \\
      v_t + \frac{1}{\epsilon^2} u_x &= \frac{1}{\epsilon^2} w, \\
      w_t + \epsilon^2 w_x &= -v.
    \end{aligned}
\end{equation}
We again apply the traveling wave ansatz \eqref{tw-ansatz}.
Integrating the first resulting equation, and using the remaining two
to eliminate $\vt, \wt$, we find
\begin{equation} \label{LNform}
 \left((c - 1) I - (I + (c - \eps^2) c \eps^2 \partial_{\xi}^2)^{-1} (c - \eps^2) \partial_{\xi}^2\right) \ut = \frac{\ut^2}{2},
\end{equation}
where $\xi = x-ct$.  Petviashvili's method consists of the iteration
\begin{equation} \label{eq:petv_fpi}
  L \ut^{[n + 1]} = m(\ut^{[n]})^\gamma N(\ut^{[n]}),
\end{equation}
where $L\ut$ and $N(\ut)$ are the left and right sides of \eqref{LNform}; i.e.
\begin{equation}
    L := \left((c - 1) I - (I + (c - \eps^2) c \eps^2 \partial_{\xi}^2)^{-1} (c - \eps^2) \partial_{\xi}^2\right),
    \qquad
    N(\ut) := \frac{\ut^2}{2},
\end{equation}
and
\begin{equation}
    m(\ut) = \frac{\langle L \ut, \ut\rangle}{\langle N \ut, \ut\rangle}.
\end{equation}
The value $\gamma$ should be chosen in order to improve the convergence of the method.
Following \cite{Pelinovsky2004} and \cite{ALVAREZ201439} we choose $\gamma = 2$.
From the solution $\ut$, we can obtain $v$ and $w$ from the traveling wave relations
\begin{equation}
    \vt = \left( c- 1 -\frac{1}{2}\ut \right)\ut,
    \qquad
    \wt = \ut' - c\epsilon^2 \vt'.
\end{equation}
The computed traveling wave solution $(\ut, \vt, \wt)$ of equation~\eqref{eq:petv_system} can be related to
the traveling wave solution of the original BBMH system~\eqref{eq:BBMH} by setting
\begin{equation}
  u = \ut - 1, \quad v = \vt, \quad w = \wt.
\end{equation}
To verify the numerically computed traveling wave solutions, we compare the resulting function $u$ to an
exact solitary wave solution of the BBM equation, which we also use as the initial guess $u^{[0]}$.
We discretize the derivatives using a Fourier collocation method on a grid with $N = 2^{10}$ points.
The result of the comparison, for $\eps = 10^{-1}, 10^{-3}$, is shown in Figure~\ref{fig:petv_validation_u}.

\begin{figure}[htbp]
  \centering
  \includegraphics[scale = 0.5]{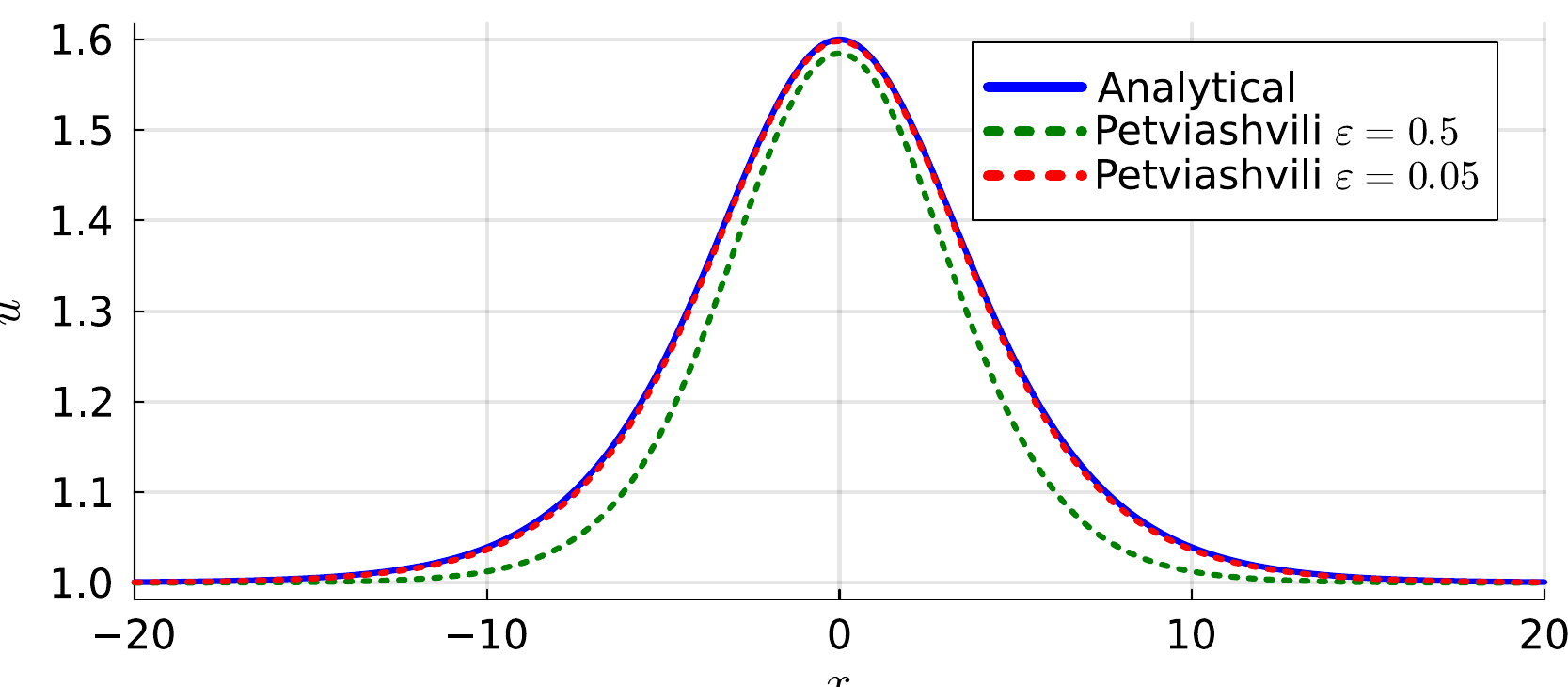}
  \caption{Numerical solution of the $u$ component of the BBMH system compared to the $u$ component of the analytical solution of the BBM equation.}
  \label{fig:petv_validation_u}
\end{figure}

\section{Splitting the BBMH system for IMEX methods: asymptotic-preserving time discretizations}\label{sec:continuous_in_space}
In this section we investigate time discretizations of the BBMH system.
\begin{definition}
  We call a numerical time discrete method applied to the BBMH system~\eqref{eq:BBMH} asymptotic preserving (AP) in $u$
  if the numerical update $u^{n + 1}$ converges towards the numerical update of the BBM solution $\uBBM^{n + 1}$, i.e.
  \begin{equation}
    u^{n + 1} \rightarrow \uBBM^{n + 1}
  \end{equation}
  for $\eps \rightarrow 0$. Similarly, we define the AP property for the $v$ and the $w$ component with the respective limits
  \begin{equation}
      v^{n + 1} \rightarrow -D_t \uBBM_x, \qquad
      w^{n + 1} \rightarrow \uBBM_x,
  \end{equation}
  where $D_t$ is a numerical time derivative induced by the applied numerical method.
\end{definition}
For a more detailed definition and discussion of the AP property see~\cite{biswas2025traveling} or~\cite{boscarino2024asymptotic}.
\subsection{General IMEX methods of type I and type~II}
From the BBMH system~\eqref{eq:BBMH} we observe that the evolution equation for $v$ becomes stiff as $\epsilon\to0$,
suggesting the use of an implicit time discretization.  In order to avoid implicit handling of the nonlinear
convective term, which would require the solution of large nonlinear algebraic systems, we explore the use of
IMEX Runge-Kutta methods, in which some terms are handled implicitly and others explicitly.
In this work we only consider diagonally implicit methods, which take the form
\begin{equation}
  \label{eq:imex_def}
  \begin{aligned}
    q^{(i)} =& q^n + \Delta t \sum_{j = 1}^{i-1} \widetilde{a}_{ij} f(q^{(j)})
    + \Delta t \sum_{j = 1}^{i} {a_{ij}} g(q^{(j)}),\\
    q^{n + 1} =& q^n + \Delta t \sum_{i = 1}^{s} \widetilde{b}_i f(q^{(i)})
    + \Delta t \sum_{i = 1}^{s} b_i g(q^{(i)}).
  \end{aligned}
\end{equation}
The method associated with $(A,b)$ with $A = (a_{ij})$ is the implicit part, where the matrix $A$ is a lower triangular matrix.
On the other hand the method $(\widetilde{A}, \widetilde{b})$ with $\widetilde{A} = \widetilde{a}_{ij}$ is explicit.

In this work we consider two types of IMEX Runge-Kutta methods, denoted type I and type~II.
Type I methods have a combined Butcher tableau of the form
\begin{equation}\label{IMEX:type_I}
  \renewcommand\arraystretch{1.3}
  \begin{array}{c|ccc}
    \pmb{\widetilde{c}} & \widetilde{A} \\
    \hline
    & \pmb{\widetilde{b}^T}
  \end{array}
  \qquad
  \begin{array}{c|ccc}
    \pmb{c} & A \\
    \hline
    & \pmb{b^T}
  \end{array},
\end{equation}
where $A$ is invertible.
Type~II methods on the other hand are characterized by an explicit Euler step in the first stage resulting in a Butcher tableau of the form
\begin{equation}
  \renewcommand\arraystretch{1.3}
  \begin{array}{c|ccc}
    0 & 0 \\
    \pmb{\widehat{\widetilde{c}}} & \pmb{\widehat{\widetilde{\alpha}}} & \widehat{\widetilde{A}} \\
    \hline
    & \widehat{\widetilde{\beta}} & \pmb{\widehat{\widetilde{b}}}^T
  \end{array}
  \qquad
  \begin{array}{c|ccc}
    0 & 0 \\
    \pmb{c} & \widehat{\pmb{\alpha}} & \widehat{A} \\
    \hline
    & \widehat{\beta} & \widehat{\pmb{b}}^T
  \end{array},
\end{equation}
where $\widehat{A}$ is invertible.
If additionally $\widehat{\pmb{\alpha}} = 0$ and $\widehat\beta = 0$, the type~II method is said to have the ARS property.
For a more detailed discussion of both types see \cite{boscarino2024asymptotic} or \cite{biswas2025traveling}.
\subsection{Splitting of the BBMH system}
The BBMH system~\eqref{eq:BBMH} can be written as
\begin{equation}
 q_t + J q_x + \begin{pmatrix} 0 \\ -{w}/{\eps^2} \\ v \end{pmatrix} = 0,
\end{equation}
with $q = (u, v, w)^T$ and
\begin{equation}
 J = \begin{pmatrix}
          u & 1 & 0 \\ \frac 1 {\eps^2} & 0 & 0 \\ 0 & 0 & \eps^2
       \end{pmatrix}.
\end{equation}
The  matrix $J$ has the eigenvalues
\begin{equation}
  \lambda_1 = \eps^2, \quad
  \lambda_{2,3} = \frac{\eps^2u \pm \eps\sqrt{\eps^2u^2 + 4}}{2\eps^2},
\end{equation}
and eigenvectors
\begin{equation}
  \begin{aligned}
    &v_1 = (0,0,1)\\
    &v_2 = \left(\frac{1}{2}\left(u\eps^2 - \eps \sqrt{\eps^2u^2 + 4}\right), 1, 0\right)\\
    &v_3 = \left(\frac{1}{2}\left(u\eps^2 + \eps \sqrt{\eps^2u^2 + 4}\right), 1, 0\right).
  \end{aligned}
\end{equation}
In particular $J$ is diagonalizable for all values of $\eps > 0$, which has also been shown in~\cite{gavrilyuk2022hyperbolic}.
It seems intuitive to treat all linear terms implicitly, while treating the nonlinear terms explicitly, as is done in \cite{biswas2025traveling} for the hyperbolization of the KdV equation.
Therefore, the splitting
\begin{equation}\label{eq:first_splitting}
  q_t + \underbrace{\begin{pmatrix}  uu_x \\ 0 \\ 0 \end{pmatrix}}_{=:-f(q)} +\underbrace{\begin{pmatrix}  v_x \\ \frac{1}{\eps^2}u_x -\frac1 {\eps^2} w \\ \eps^2w_x + v \end{pmatrix}}_{=:-g(q)}  =  0
\end{equation}
seems natural.

To ensure that this splitting works well with IMEX methods, the systems obtained by replacing $f(q)$ or $g(q)$ by zero
must separately be hyperbolic, and the eigenvalues of $f'(q)$ have to be bounded (independently of $\eps$ as $\eps \to 0$) \cite{NoeSch2014}.
We say a splitting that satisfies these two properties is \emph{admissible}.
We will show in Theorem~\ref{thm:splitting_BBMH} that the splitting~\eqref{eq:first_splitting} is admissible.
Although these conditions are necessary for good numerical behavior, they are not sufficient:
it has been pointed out in~\cite{NoeSch2014} that, even for linear problems, not every admissible splitting will lead to a uniformly stable algorithm.
In fact, the splitting~\eqref{eq:first_splitting} is part of a three-parameter family of admissible splittings defined by
\begin{equation}
  \label{eq:splitting-family}
    q_t
    +
    \underbrace{\begin{pmatrix}
            u u_x + \delta_1 \eps v_x \\
      \frac{\delta_2}{\eps} u_x \\
      \delta_3 \eps^2 w_x
    \end{pmatrix}}_{=:-f(q)}
    +
    \underbrace{\begin{pmatrix}
            (1 - \delta_1 \eps) v_x \\
      \frac{1 - \delta_2 \eps}{\eps^2} u_x - \frac{1}{\eps^2} w \\
      (1 - \delta_3) \eps^2 w_x + v
    \end{pmatrix}}_{=:-g(q)}
     = 0,
\end{equation}
where $\delta_i \in [0,1]$ do not depend on $\eps$.
The splitting~\eqref{eq:first_splitting} is included by choosing $\delta_i = 0$.
\begin{theorem}\label{thm:splitting_BBMH}
  The splitting family~\eqref{eq:splitting-family} is admissible if the following conditions are satisfied:
  \begin{itemize}
    \item for all $i \in \{1,2,3\}$ we have $\delta_i \in [0,1]$,
    \item $\delta_1 \eps \leq 1$,
    \item $\delta_2 \eps \leq 1$,
    \item if $\delta_1 = 0$ or $\delta_2 = 0$, then both of them are zero,
    \item if $\delta_1 \eps = 1$ or $\delta_2 \eps = 1$, then both of them are unity.
  \end{itemize}
\end{theorem}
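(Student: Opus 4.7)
The plan is to verify the two components of admissibility separately. First, each subsystem $q_t + f(q) = 0$ and $q_t + g(q) = 0$ must be hyperbolic, which in the quasilinear form $q_t + F_\# q_x + (\text{source}) = 0$ means the corresponding flux Jacobian $F_\#$ is real-diagonalizable. Second, the eigenvalues of $F_f$ must remain bounded as $\eps \to 0$. A useful structural observation is that both Jacobians decouple into a $2\times 2$ block acting on $(u,v)$ and a scalar acting on $w$, so the spectral analysis reduces to two small computations plus a trivial one.

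Reading the Jacobians off \eqref{eq:splitting-family},
\begin{equation*}
F_f = \begin{pmatrix} u & \delta_1 \eps & 0 \\ \delta_2/\eps & 0 & 0 \\ 0 & 0 & \delta_3 \eps^2 \end{pmatrix},
\qquad
F_g = \begin{pmatrix} 0 & 1 - \delta_1 \eps & 0 \\ (1 - \delta_2 \eps)/\eps^2 & 0 & 0 \\ 0 & 0 & (1-\delta_3) \eps^2 \end{pmatrix},
\end{equation*}
I would compute the eigenvalues: for $F_f$ they are $\delta_3\eps^2$ and $\tfrac12\bigl(u \pm \sqrt{u^2 + 4 \delta_1 \delta_2}\bigr)$, and for $F_g$ they are $(1-\delta_3)\eps^2$ and $\pm \sqrt{(1-\delta_1\eps)(1-\delta_2\eps)}/\eps$. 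Reality of the $F_f$ eigenvalues is automatic from $\delta_1, \delta_2 \geq 0$. Reality of the $F_g$ eigenvalues requires $(1-\delta_1\eps)(1-\delta_2\eps) \geq 0$; combined with $\delta_i \in [0,1]$ this produces precisely the two bounds $\delta_1\eps \leq 1$ and $\delta_2\eps \leq 1$ that appear in the statement.

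What remains is diagonalizability, and this is the delicate step. A repeated eigenvalue in a $2\times 2$ block is harmless only if the block is a scalar multiple of the identity. For the $F_f$ block a repeated eigenvalue occurs exactly when $u^2 + 4\delta_1\delta_2 = 0$, i.e.\ at $u = 0$ with $\delta_1\delta_2 = 0$; the block then degenerates to a nontrivial Jordan cell unless both off-diagonal entries vanish, forcing the condition $\delta_1 = 0 \Leftrightarrow \delta_2 = 0$. The $F_g$ block collapses analogously when $(1-\delta_1\eps)(1-\delta_2\eps) = 0$, and the same Jordan-cell argument yields the paired condition $\delta_1\eps = 1 \Leftrightarrow \delta_2\eps = 1$. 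Away from these degenerate parameter values the two roots are distinct, so the blocks are diagonalizable automatically, and the scalar $w$-blocks are trivially so. Finally, uniform $\eps$-boundedness of the eigenvalues of $F_f$ is built into the splitting: the product $(\delta_1\eps)(\delta_2/\eps) = \delta_1\delta_2$ is independent of $\eps$, so $\tfrac12(u \pm \sqrt{u^2 + 4\delta_1\delta_2})$ depends on $\eps$ only through $u$, while $\delta_3\eps^2 \to 0$. The main obstacle is identifying and carefully ruling out these two Jordan-block degeneracies at the parameter boundaries; once that bookkeeping is made explicit, the remaining verification is a direct computation.
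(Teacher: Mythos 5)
Your proposal is correct and follows essentially the same route as the paper: compute the flux Jacobians of the two subsystems, verify reality of the eigenvalues (which yields the bounds $\delta_1\eps\le 1$, $\delta_2\eps\le 1$), and check diagonalizability, with the paired conditions on $\delta_1,\delta_2$ arising precisely from excluding nontrivial Jordan blocks in the degenerate $2\times 2$ cases. If anything, you are more explicit than the paper about why the mixed cases (exactly one of $\delta_1,\delta_2$ zero, or exactly one of $\delta_1\eps,\delta_2\eps$ equal to one) must be excluded, and about the $\eps$-uniform boundedness of the explicit eigenvalues, both of which the paper leaves largely implicit.
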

\begin{proof}
  For the function $f(q)$, we get the flux
  \begin{equation}
    F(q) :=
    \begin{pmatrix}
      -\frac{u^2}{2} -\delta_1 \eps v\\
      -\frac{\delta_2}{\eps} u\\
      -\delta_3 \eps^2 w\\
    \end{pmatrix},
  \end{equation}
  which has the Jacobian
  \begin{equation}
    F^\prime(q) =
    \begin{pmatrix}
      -u  & -\delta_1\eps & 0\\
      -\frac{\delta_2}{\eps} & 0 & 0\\
      0 & 0 & -\delta_3\eps\\
    \end{pmatrix}
  \end{equation}
  with the eigenvalues
  \begin{equation}
    \lambda_1 = -\delta_3 \eps \qquad \lambda_{2,3} = \frac{u}{2} \pm \sqrt{\frac{u^2}{4} + \delta_1\delta_2}.
  \end{equation}
  Since $\delta_1, \delta_2 \geq 0$ are positive all eigenvalues are real for all parameter choices.
  Furthermore, the eigenvectors are given by
  \begin{equation}
    \begin{aligned}
      v_1 &= \left( \frac{\eps \sqrt{1 - \delta_1 \eps}}{\sqrt{1 - \delta_2 \eps}}, 1, 0\right)^T\\
      v_2 &= \left(-\frac{\eps \sqrt{1 - \delta_1 \eps}}{\sqrt{1 - \delta_2 \eps}}, 1, 0\right)^T\\
      v_3 &= (0,0,1)^T\\
    \end{aligned}
  \end{equation}
if $\delta_1, \delta_2 \neq 0$. Thus, for this case we note that the matrix $F^\prime(q)$ is diagonalizable over $\mathbb{R}$. In the case that $\delta_1 = \delta_2 = 0$ we observe
that the matrix $F^\prime(q)$ is diagonalizable and therefore for all cases the subsystem $f(q)$ is hyperbolic.
On the other hand, the flux induced by the function $g(q)$ is
\begin{equation}
  G(q) =
  \begin{pmatrix}
    (1 - \delta_1\eps)v\\
    \frac{1 - \delta_2\eps}{\eps^2} u\\
    (1 - \delta_3)\eps^2 w
  \end{pmatrix},
\end{equation}
which has the Jacobian
\begin{equation}
  G^\prime(q) =
  \begin{pmatrix}
    0 & (1- \delta_1\eps) & 0\\
    \frac{1-\delta_2 \eps}{\eps^2} & 0 & 0\\
    0 & 0 & (1-\delta_3)\eps^2\\
  \end{pmatrix}
\end{equation}
with eigenvalues
\begin{equation}
  \lambda_1 = (1-\delta_3)\eps^2 \qquad \lambda_{2,3} = \pm \sqrt{(1 - \delta_1\eps) \frac{(1-\delta_2\eps)}{\eps^2}}.
\end{equation}
Since $\delta_1 \eps \leq 1$ and $\delta_2 \eps \leq 1$ the eigenvalues are real.
For the edge case $\delta_1 \eps = 1$ and $\delta_2 \eps = 1$ we observe that the matrix is already in diagonal form.
For $\delta_1 \neq 1$ and $\delta_2 \neq 1$ we have the eigenvectors
\begin{equation}
  \begin{aligned}
    v_1 &=  \left(\frac{\eps \sqrt{1 - \delta_1 \eps}}{\sqrt{1 - \delta_2 \eps}}, 1, 0\right)^T\\
    v_2 &= -\left(\frac{\eps \sqrt{1 - \delta_1 \eps}}{\sqrt{1 - \delta_2 \eps}}, 1, 0\right)^T\\
    v_3 &= \left(0,0,1\right)^T,
  \end{aligned}
\end{equation}
hence the subsystem $g(q)$ is hyperbolic under the given constraints.
\end{proof}
Since Theorem~\ref{thm:splitting_BBMH} allows us some choice, it seems desirable to make the splitting as explicit as possible, i.e., to reduce numerical cost,
while at the same time guaranteeing stability in the asymptotic limit $\eps \to 0$.
For the numerical experiments in Section~\ref{sec:numerical_experiments} we therefore use the splitting
\begin{equation}\label{eq:BBMH_split_1}
  q_t
  + \underbrace{\begin{pmatrix}
    u u_x \\
    0 \\
    \eps^2 w_x
  \end{pmatrix}}_{=:-f(q)}
  + \underbrace{\begin{pmatrix}
    v_x \\
    (u_x - w) / \epsilon^2 \\
    v
  \end{pmatrix}}_{=:-g(q)}
   = 0.
\end{equation}
This splitting is \eqref{eq:splitting-family} with $\delta_1 = \delta_2 = 0$ and $\delta_3 = 1$.
This choice ensures that the $\eps^2 w_x$ term is handled explicitly, reducing the size of the linear system to be solved.
Moreover, it avoids terms of the form $1 / \eps$ which may increase the numerical errors for small $\eps$ (depending on the implementation).

\subsection{Asymptotic preserving IMEX schemes}\label{sec:AP_results}
We investigate under which circumstances IMEX methods of type I and II admit to the AP property when applied to the splitting \eqref{eq:splitting-family}.
Similar to \cite{biswas2025traveling} we define
\begin{definition}
  An IMEX method is said to be \textit{globally stiffly accurate} (GSA) if
  \begin{equation}
    \widetilde{a}_{si} = \widetilde{b}_{i} \quad a_{si} = b_{i},
  \end{equation}
  for all $i \in \{1, \ldots, s\}$.
  In particular the implicit submethod is \textit{stiffly accurate}, while the explicit submethod possesses the \textit{first-same-as-last} (FSAL) property.
\end{definition}

\begin{definition}
  The initial data for the BBMH system is said to be well-prepared if \cite{boscarino2024asymptotic}
  $u(x, 0) = \uBBM(x, 0)$ and $w(x,0) = \uBBM_x(x,0)$.
\end{definition}
\begin{remark}
  From the previous definition it might seem odd that we do not put any constraints on the initial value of $v$.
  It turns out (see Theorems~\ref{thm:typeIAP} and~\ref{thm:typeIIAP}) that we do not need to constrain $v(x,t=0)$ in order to ensure the AP property.
\end{remark}
Using this definition we can generalize our observations from the previous section for type~I methods by stating
\begin{theorem}\label{thm:typeIAP}
  Given the BBMH system in the splitting~\eqref{eq:splitting-family} initialized with well-prepared initial data
  and $v(x,0) = v_0(x;\eps)$, where $v_0(x;\eps)$ is a sufficiently smooth function which may depend on $\eps$ in an arbitrary way.
  Then every GSA IMEX method of type~I is AP, i.e.,
  \begin{equation}
      u^n \to \uBBM^n, \qquad
      v^n \to -D_t \uBBM^n_{x}, \qquad
      w^n \to \uBBM^n_{x},
  \end{equation}
  for $\eps \to 0$, where $D_t$ denotes the discrete derivative operator given in \eqref{eq:thmAPI_Dt_v}.
\end{theorem}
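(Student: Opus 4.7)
The plan is to substitute the IMEX stage equations into the splitting~\eqref{eq:splitting-family} and take the formal limit $\eps \to 0$ in each of the three stage updates. I would first record the stage equations for $u^{(i)}, v^{(i)}, w^{(i)}$, then multiply the $v$-stage equation by $\eps^2$ to expose the stiff constraint. In that limit the $v$-stage equation collapses to $\sum_{j=1}^{i} a_{ij}(w^{(j)} - u_x^{(j)}) = 0$ for all $i$; since the implicit Butcher matrix $A$ is invertible (type~I), this forces $w^{(j)} = u_x^{(j)}$ at every stage. Note that the explicit $\delta_1, \delta_2, \delta_3$ contributions drop out in this limit, so the argument applies uniformly to the entire admissible splitting family.

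Using this limit identity together with the well-prepared initial datum $w^n = u_x^n$, the $w$-stage equation reduces to $u_x^{(i)} = u_x^n - \dt \sum_{j} a_{ij} v^{(j)}$. Differentiating the $u$-stage equation in $x$ and equating the two expressions for $u_x^{(i)}$ yields the stage-vector identity $A(I - \partial_x^2) V = \tilde A\, \partial_x(u u_x)$, from which invertibility of $A$ gives $V = (I - \partial_x^2)^{-1} A^{-1} \tilde A\, \partial_x(u u_x)$. Substituting this expression for $v^{(j)}_x$ back into the $u$-stage equation and using the operator identity $I + \partial_x^2 (I-\partial_x^2)^{-1} = (I-\partial_x^2)^{-1}$ collapses the two sums into a single explicit RK step for $u_t = -(I-\partial_x^2)^{-1}(u u_x)$, which is the BBM equation cast in Hamiltonian form. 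This is exactly the ERK method with tableau $(\tilde A, \tilde b)$ applied to BBM, so GSA yields $u^{n+1} = u^{(s)} \to \uBBM^{n+1}$; using $w^{(s)} = u_x^{(s)}$ together with GSA also gives $w^{n+1} \to \uBBM_x^{n+1}$.

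For the $v$-component, the limiting relation $A V = -\tfrac{1}{\dt}(u_x - u_x^n \vec 1)$ (a stage-vector identity) together with GSA can be used to read off the discrete $v$-update. Inverting and taking the last component via $e_s^T = b^T A^{-1}$ produces a formula of the form $v^{n+1} = v^{(s)} = -D_t u_x^{n+1}$, which I would display at the place marked \eqref{eq:thmAPI_Dt_v} to anchor the definition of $D_t$. Combined with $u_x^{n+1} \to \uBBM_x^{n+1}$, this gives $v^{n+1} \to -D_t \uBBM^{n+1}_x$ as required.

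The main obstacle is justifying that the formal $\eps \to 0$ limit is the actual pointwise limit, since $v_0(x;\eps)$ may behave arbitrarily in $\eps$. Concretely, the stage system for $V$ can be written as $\bigl(\eps^2 I_s + \dt^2 A^2(I - \partial_x^2)\bigr) V = \eps^2 \vec 1\, v^n - \dt^2 A \tilde A\, \Psi$, where $\Psi$ is the stage-vector built from $\eps^2 w_x - (u u_x)_x$. Because $A^2(I-\partial_x^2)$ is invertible (with bounded inverse on smooth functions), the coefficient multiplying the potentially bad $v^n$ contribution is $O(\eps^2)$, so the initial-layer error in $v$ is damped within a single time step. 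The remaining arithmetic is a direct algebraic manipulation of the stage equations analogous to the KdVH analysis in \cite{biswas2025traveling}.
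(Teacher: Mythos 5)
Your argument is correct and follows essentially the same route as the paper's proof: take the leading-order ($\eps \to 0$) limit of the stage equations, use invertibility of $A$ to extract the algebraic constraint $\pmb{w} = \pmb{u}_x$ at the stages, combine the $w$-stage with well-preparedness to get $A\pmb{v} = -(\pmb{u}_x - u^n_x\pmb{1})/\dt$, substitute into the $u$-stage to recover the explicit tableau $(\widetilde{A},\pmb{\widetilde{b}})$ applied to $(\I-\partial_x^2)\uBBM_t = -\uBBM\uBBM_x$, and invoke GSA to identify $q^{n+1}$ with $q^{(s)}$ for all three components. Your closing observation that the non-well-prepared $v^n$ enters only through an $\eps^2$-weighted term in the system $\bigl(\eps^2 \I + \dt^2 A^2(\I-\partial_x^2)\bigr)\pmb{v} = \dots$ is a small extra justification of the limit beyond the paper's formal Hilbert expansion, but it does not change the approach.
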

\begin{proof}
  Let $\pmb{u} = [u^{(1)}, u^{(2)}, \dots, u^{(s)}]^T$, $\pmb{v} = [v^{(1)}, v^{(2)}, \dots, v^{(s)}]^T$, and $\pmb{w} = [w^{(1)}, w^{(2)}, \dots, w^{(s)}]^T$ denote the vectors of
  the stage-solution components corresponding to the variables $u$, $v$, and $w$, respectively. Furthermore, let $\pmb{1} = [1, 1, \dots, 1]^T$ represent the vector of ones in $\mathbb{R}^s$.
Also, let $q^n = [u^n, v^n, w^n]^T$ denote the numerical solution at $t^n$, and let $q^{(i)} = [u^{(i)},v^{(i)},w^{(i)}]^T$ denote the $i$-th stage for $i \in \{1, 2, \dots, s\}$. Then the
update of the solution by an IMEX method of type I \eqref{IMEX:type_I} applied to the splitting~\eqref{eq:splitting-family} in component form can be written as
\begin{subequations}
\begin{align}
    \pmb{u} &= u^n \pmb{1} - \Delta t  \tIAex (\pmb{u} \pmb{u}_x) - \Delta t \delta_1 \eps \tIAex \pmb{v}_x - \Delta t \tIAim(1-\delta_1 \eps) \pmb{v}_x \;, \label{Eq:stage_u_type_I} \\
    \pmb{v} &= v^n \pmb{1} - \Delta t \frac{\delta_2}{\eps} \tIAex \pmb{u}_x - \Delta t \left[ \frac{1-\delta_2 \eps}{\eps^2} \tIAim \pmb{u}_x - \frac{1}{\eps^2} \tIAim \pmb{w}\right] \;, \label{Eq:stage_v_type_I} \\
    \pmb{w} &= w^n \pmb{1} - \Delta t \delta_3 \eps^2 \tIAex \pmb{w}_x - \Delta t \left[(1-\delta_3)\eps^2 \tIAim \pmb{w}_x + \tIAim \pmb{v}\right] \;. \label{Eq:stage_w_type_I}
\end{align}
\end{subequations}
Similarly, the final update of the solution can be written in components as
\begin{subequations}
\begin{align}
  u^{n+1} & = u^n - \Delta t  \tIbex^T  (\pmb{u} \pmb{u}_x) - \Delta t \delta_1 \eps \tIbex^T \pmb{v}_x - \Delta t \tIbim^T (1-\delta_1 \eps) \pmb{v}_x  \;, \label{Eq:sol_u_type_I} \\
  v^{n+1} & = v^n - \Delta t \frac{\delta_2}{\eps} \tIbex^T  \pmb{u}_x - \Delta t \left[ \frac{1-\delta_2 \eps}{\eps^2} \tIbim^T \pmb{u}_x - \frac{1}{\eps^2} \tIbim^T  \pmb{w}\right] \;, \label{Eq:sol_v_type_I} \\
  w^{n+1} & = w^n - \Delta t \delta_3 \eps^2 \tIbex^T  \pmb{w}_x - \Delta t \left[(1-\delta_3)\eps^2 \tIbim^T  \pmb{w}_x + \tIbim^T \pmb{v}\right] \;. \label{Eq:sol_w_type_I}
\end{align}
\end{subequations}
Using the Hilbert expansion for the solution $q^n$ and the stages $q^{(i)}$ in \eqref{Eq:stage_v_type_I}, we obtain the leading order term as
\begin{equation}
  \tIAim ((\pmb{u}_0)_x - \pmb{w}_0) = \pmb{0}.
\end{equation}
Since $\tIAim$ is invertible, it has only a trivial kernel, and therefore $(\pmb{u}_0)_x  - \pmb{w}_0 = \pmb{0}$, or equivalently, $\pmb{w}_0 = (\pmb{u}_0)_x$.
By using the well-prepared initial data $w^n_0 = (u^n_{0})_x$ and $\pmb{w}_0 = (\pmb{u}_0)_x $ in the leading-order term of the expansion
of \eqref{Eq:stage_w_type_I}, we arrive at
\begin{equation}\label{Eq:stage_v_0_type_I}
  \pmb{v}_0 = -\tIAim^{-1}\left(\frac{\pmb{u}_0 - u_0^n \pmb{1}}{\Delta t}\right)_{x}.
\end{equation}
In the limit $\eps \to 0$, the stage equation \eqref{Eq:stage_u_type_I} for the $u$-component becomes
\begin{equation}\label{Eq:stage_u_0_type_I}
  \pmb{u}_0 = u_0^n \pmb{1} - \Delta t \tIAex \pmb{u}_0 (\pmb{u}_0)_x - \Delta t \tIAim (\pmb{v}_0)_x .
\end{equation}
Substituting $\pmb{v}_0$ from the previous equation, we obtain
\begin{equation}\label{Eq:stage_u_0_1_type_I}
  (\I - \partial_x^2) \left( \frac{\pmb{u}_0 - u_0^n \pmb{1}}{\Delta t} \right) = -\tIAex \pmb{u}_0 (\pmb{u}_0)_x.
\end{equation}
From the expansion of the solution update equation \eqref{Eq:sol_u_type_I} we obtain the leading order term
\begin{equation}\label{Eq:sol_u_0_type_I}
u^{n+1}_0 = u^n_0 - \Delta t  \tIbex^T  \pmb{u}_0 (\pmb{u}_0)_x - \Delta t \tIbim^T (\pmb{v}_0)_x \;.
\end{equation}
Using the last component of the equation \eqref{Eq:stage_u_0_type_I} we obtain
\begin{equation}
u^{(s)}_0 = u^n_0 - \Delta t  \widetilde{A}_s  \pmb{u}_0 (\pmb{u}_0)_x - \Delta t A_s (\pmb{v}_0)_x \;,
\end{equation}
where $A_s$ and $\widetilde{A}_s$ denote the $s$-th row of the matrix $A$ and $\widetilde{A}$, respectively.
Since the method is assumed to be GSA, comparing the last equation with equation \eqref{Eq:sol_u_0_type_I}, we conclude that $u^{n+1}_0=u^{(s)}_0$.
Now use $u^{n+1}_0=u^{(s)}_0$ in the last component of \eqref{Eq:stage_u_0_1_type_I} to obtain the update for the $u$-component
\begin{equation}\label{Eq:sol_u_0_1_type_I}
  (\I - \partial_x^2) \left( \frac{u^{n+1}_0 - u_0^n}{\Delta t} \right) = -\tIbex^T \pmb{u}_0 (\pmb{u}_0)_x.
\end{equation}
 The equation \eqref{Eq:stage_u_0_1_type_I} together with \eqref{Eq:sol_u_0_1_type_I} is equivalent to the numerical scheme one gets when applying the explicit part of the
 IMEX method to the BBM equation. This proves the AP property of the $u$-component.

The leading order term in the expansion of the solution-update equation \eqref{Eq:sol_w_type_I} for the $w$-component yields
\begin{equation}\label{Eq:sol_w_0_type_I}
  w^{n+1}_0 = w^n_0 - \Delta t \tIbim^T \pmb{v}_0 \;.
\end{equation}
Using \eqref{Eq:stage_v_0_type_I},
$u^{n+1}_0 = u^{(s)}_0$, and the fact that $\tIbim^T A^{-1}=[0,0,\ldots,0,1]^T$, we obtain
\begin{equation}
  \begin{aligned}
  \tIbim^T \pmb{v}_0
  & = -\tIbim^T \tIAim^{-1}\left(\frac{\pmb{u}_0 - u_0^n \pmb{1}}{\Delta t}\right)_{x}
    = -\left(\frac{u^{(s)}_0 - u_0^n}{\Delta t}\right)_{x}
    = -\left(\frac{{u^{n+1}_0} - u_0^n}{\Delta t}\right)_{x} \;.
  \end{aligned}
\end{equation}
Using this and the well-prepared initial date in \eqref{Eq:sol_w_0_type_I}, we get $w^{n+1}_0 = (u^{n+1}_0)_x$, proving the AP property for the $w$-component.

Finally, using the GSA property of the IMEX method, from equations \eqref{Eq:stage_v_type_I} and \eqref{Eq:sol_v_type_I} we obtain that
$v^{n+1} = v^{(s)}$ and hence $v^{n+1}_0 = v^{(s)}_0$. On the other hand, the last component of \eqref{Eq:stage_v_0_type_I} gives
\[
v^{(s)}_0 = -\sum_{j = 1}^{s} \alpha_{sj} \left(\frac{u^{(j)}_0 - u^n_0}{\Delta t}\right)_x ,
\]
where $\alpha_{ij}$ are the entries of $A^{-1}$, i.e., $A^{-1} = (\alpha_{ij})$. Hence, we obtain
\begin{equation}\label{eq:thmAPI_Dt_v}
  v^{n+1}_0 = -\sum_{j = 1}^{s} \alpha_{sj} \frac{(u^{(j)}_0)_x - (u^n_0)_x}{\Delta t} =: -D_t  (u^{n+1}_0)_x \;.
\end{equation}
This completes the proof of the AP property for the $v$-component.
\end{proof}

Similarly, we obtain for type~II methods
\begin{theorem}\label{thm:typeIIAP}
  Given the BBMH system in the splitting~\eqref{eq:splitting-family} initialized with well-prepared initial data and $v(x, t) = v_0(x;\eps)$, where $v_0(x;\eps)$ is a sufficiently smooth function. Then every GSA IMEX method of type~II is AP with respect to $u$ and $w$, i.e.
  \begin{equation}
      u^n {\rightarrow } \uBBM^n\ \qquad
      w^n {\rightarrow } \uBBM_{x}^n,
  \end{equation}
  for $\eps \rightarrow 0$. Furthermore, if the type~II method has the ARS property the $v$ component is AP as well in the sense that
  \begin{equation}
    v^n {\rightarrow } -D_t \uBBM^n_{x},
  \end{equation}
  where $D_t$ is the discrete time derivative operator given in \eqref{eq:APII_def_aplimit_v}.
\end{theorem}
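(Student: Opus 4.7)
The plan is to mirror the Hilbert-expansion proof of Theorem~\ref{thm:typeIAP} while carefully tracking how the explicit Euler first stage $q^{(1)}=q^n$ and the reduced-but-invertible implicit matrix $\widehat A$ modify the argument. First I would write the stage equations and the final update for each component in block form, isolating the contributions from the first stage $q^{(1)}=q^n$ (multiplied by $\widehat{\pmb\alpha}$, $\widehat{\widetilde{\pmb\alpha}}$, $\widehat\beta$, $\widehat{\widetilde\beta}$) from those of the remaining stages $2,\dots,s$ (multiplied by $\widehat A$, $\widehat{\widetilde A}$, $\widehat{\pmb b}^T$, $\widehat{\widetilde{\pmb b}}^T$).

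Second, I would insert the Hilbert expansion into the $v$-stage equations for $i=2,\dots,s$. The $\eps^{-2}$ coefficient reads
\[
\widehat A\bigl((\widehat{\pmb u}_0)_x-\widehat{\pmb w}_0\bigr) + \widehat{\pmb\alpha}\bigl((u^n_0)_x-w^n_0\bigr) = 0,
\]
where $\widehat{\pmb u}$ and $\widehat{\pmb w}$ collect the stages $2,\dots,s$. The second summand vanishes by well-preparedness, and invertibility of $\widehat A$ yields $\widehat{\pmb w}_0=(\widehat{\pmb u}_0)_x$; combined with $q^{(1)}=q^n$ this gives $\pmb w_0=(\pmb u_0)_x$. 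Substituting into the $w$-stage equations and solving for $\widehat{\pmb v}_0$ produces an analog of~\eqref{Eq:stage_v_0_type_I}, and plugging this back into the $u$-stage equations gives a BBM-like semi-discrete relation for $\pmb u_0$. GSA together with stiff accuracy of $\widehat A$ then forces $u^{n+1}_0=u^{(s)}_0$, which coincides with the update produced by the explicit part of the IMEX method applied to BBM; hence $u^n\to\uBBM^n$, and consequently $w^{n+1}_0=(u^{n+1}_0)_x$ via the leading order of the $w$-update together with GSA. This half of the proof uses only GSA and the invertibility of $\widehat A$; the ARS hypothesis plays no role here.

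Finally, for the $v$ component, GSA gives $v^{n+1}=v^{(s)}$, and writing $v^{(s)}_0$ via the last row of the $v$-stage relation expresses it as a linear combination of the differences $((u^{(j)}_0)_x-(u^n_0)_x)/\Delta t$, which defines the candidate discrete time derivative $D_t(u^{n+1}_0)_x$, plus a residual term proportional to the unconstrained initial datum $v^n$ coming from the first-stage coupling through $\widehat{\pmb\alpha}$ (and, for the update itself, through $\widehat\beta$). Since $v^n=v_0(x;\eps)$ is not fixed by well-preparedness, this residual must be eliminated by assumption, and $\widehat{\pmb\alpha}=0$ together with $\widehat\beta=0$, i.e.\ the ARS property, is precisely what is needed. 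The main obstacle is tracking this $v^n$-dependence cleanly through the $\eps^{-2}$, $\eps^{-1}$, and $\eps^0$ orders of the Hilbert expansion to verify that ARS is indeed the minimal additional hypothesis; the remaining manipulations are direct adaptations of the argument in Theorem~\ref{thm:typeIAP}.
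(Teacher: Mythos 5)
Your proposal follows essentially the same route as the paper's proof: the same Hilbert expansion with the first stage $q^{(1)}=q^n$ isolated, the same use of well-preparedness plus invertibility of $\widehat A$ to get $\pmb w_0=(\pmb u_0)_x$, the same substitution of the $w$-stage relation (as the combination $\widehat{\pmb\alpha}v^{(1)}+\widehat A\pmb v$, which is all the $u$-equation needs) to recover the explicit BBM scheme under GSA, and the same observation that ARS ($\widehat{\pmb\alpha}=0$) is exactly what lets you isolate $\pmb v$ and obtain the $D_t$ limit for the $v$-component. The argument is correct and matches the paper.
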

\begin{proof}
The proof is similar to the proof of Theorem~\ref{thm:typeIAP}. We set $\pmb{u} = [u^{(2)}, \ldots, u^{(s)}]$, $\pmb{v} = [v^{(2)}, \ldots, v^{(s)}]$, $\pmb{w} = [w^{(2)}, \ldots, w^{(s)}]$, while the rest of the notation is identical to the previous theorem.
The corresponding calculations are given without much comment.
From the splitting we obtain
\begin{equation}
  \begin{aligned}
    \pmb{u} &= u^n \pmb{1} - \Delta t \widehat{\widetilde{\pmb{\alpha}}} u^{(1)}u_x^{(1)} - \Delta t \widehat{\widetilde{A}} \pmb{u}\pmb{u}_x
    - \Delta t \delta_1 \eps \widehat{\widetilde{\pmb{\alpha}}} v^{(1)}_x - \Delta t \delta_1 \eps \widehat{\widetilde{A}} \pmb{v}_x - \Delta t (1 - \delta_1 \eps) \widehat{\pmb{\alpha}} v^{(1)}_x - \Delta t (1-\delta_1 \eps) \widehat{A} \pmb{v}_x,\\
    \pmb{v} &= v^n \pmb{1} - \Delta t \widehat{\widetilde{\pmb{\alpha}}} \frac{\delta_2}{\eps} u^{(1)}_x - \Delta t \frac{\delta_2}{\eps} \widetilde{\widehat{A}} \pmb{u}_x
    -\Delta t \frac{1 - \delta_2 \eps}{\eps^2} \widehat{\pmb{\alpha}} u^{(1)}_x -\Delta t \frac{1 - \delta_2 \eps}{\eps^2} \widehat{A} \pmb{u}_x
    +\Delta t \frac{1}{\eps^2} \widehat{\pmb{\alpha}} w^{(1)} + \Delta t \frac{1}{\eps^2} \widehat{A} \pmb{w},\\
    \pmb{w} &= w^n \pmb{1} - \Delta t \pmb{\widehat{\widetilde{\alpha}}} \delta_3 \eps^2 w^{(1)}_x - \Delta t \widehat{\widetilde{A}} \delta_3 \eps^2 \pmb{w}_x
    -\Delta t (1 - \delta_3) \widehat{\pmb{\alpha}} \eps^2 w^{(1)}_x - \Delta t(1 - \delta_3) \widehat{A} \eps^2 \pmb{w}_x - \Delta t \widehat{\pmb{\alpha}} v^{(1)} - \Delta t \widehat{A} \pmb{v},
  \end{aligned}
\end{equation}
where terms of the form $\pmb{a} \pmb{b}$ are evaluated pointwise. Using the Hilbert expansion we get in the leading order for the stages $\pmb{v}$
\begin{equation}
  \begin{aligned}
    \widehat{\pmb{\alpha}} (u^{(1)}_{0,x} - w^{(1)}_0) + \widehat{A} (\pmb{u}_{0,x} - \pmb{w_0}) = 0.
  \end{aligned}
\end{equation}
Since $u^{(1)} = u^n$ for type~II methods and the initial data $u^n$ is well-prepared we have $u^{(1)}_{0,x} = w^{(1)}_0$. The remaining equation $\widehat{A} (\pmb{u}_{0,x} - \pmb{w_0}) = 0$
yields $\pmb{u}_{0,x} = \pmb{w_0}$, since $\widehat{A}$ is invertible. This shows the AP property of the $w$ component.
From the third equation we obtain in the leading order
\begin{equation}
  \begin{aligned}
    \pmb{w}_0 = w^n \pmb{1} - \Delta t \widehat{\pmb{\alpha}} \pmb{v} - \Delta t \widehat{A}\pmb{v}
  \end{aligned}
\end{equation}
which can be rewritten to
\begin{equation}\label{eq:proofAPII_v}
  \widehat{\pmb{\alpha}} v^{(1)} + \widehat{A}\pmb{v} = -\frac{\pmb{w}_0 - w^n \pmb{1}}{\Delta t}.
\end{equation}
Finally, from the first equation we get asymptotically in the leading order
\begin{equation}
  \begin{aligned}
    \pmb{u}_0 = u^n \pmb{1} - \Delta t \widehat{\widetilde{\pmb{\alpha}}} u^{(1)}_0 u^{(1)}_{0,x} - \Delta t \widehat{\widetilde{A}} \pmb{u}_0\pmb{u}_{0,x}
    -\Delta t \widehat{\pmb{\alpha}} v^{(1)}_x - \Delta t \widehat{A} v_x.
  \end{aligned}
\end{equation}
Inserting the identity from equation \eqref{eq:proofAPII_v} yields
\begin{equation}
  \pmb{u}_0 = u^n \pmb{1} - \Delta t \widehat{\widetilde{\pmb{\alpha}}} \pmb{u}_0\pmb{u}_{0,x} - \Delta t \widehat{\widetilde{A}} \pmb{u}_0\pmb{u}_{0,x}
    - (\pmb{u}_0 - u_0^n \pmb{1})_x,
\end{equation}
which we can rewrite to
\begin{equation}
  (I - \partial_x^2)(\pmb{u}_0 - u^n \pmb{1}) =  - \Delta t \widehat{\widetilde{\pmb{\alpha}}} \pmb{u}_0\pmb{u}_{0,x} - \Delta t \widehat{\widetilde{A}} \pmb{u}_0\pmb{u}_{0,x}.
\end{equation}
Since the method is assumed to be GSA we can deduce the AP property of the $u$ component.
Finally, if we assume the method to have the ARS property $\widehat{\pmb{\alpha}} = 0$ we obtain from equation~\eqref{eq:proofAPII_v}
\begin{equation}
  \pmb{v} = -\widehat{A}^{-1}\frac{\pmb{w}_0 - w^n \pmb{1}}{\Delta t}.
\end{equation}
Since the method is assumed to be GSA we obtain
\begin{equation}\label{eq:APII_def_aplimit_v}
  v^{(s)} = v^{n + 1} = A^{-1} \frac{u_0^{n + 1} - u_0^n}{\Delta t} = D_t u^{n + 1}.
  \qedhere
\end{equation}
\end{proof}

\begin{remark}\label{rem:AP_GSA}
  In both Theorems~\ref{thm:typeIAP} and~\ref{thm:typeIIAP} we assumed the method to be GSA.
  The necessity of this condition can be seen as follows.
  If one only considers stiffly-accurate methods, one has
  to compute the numerical update separately as well. For example the numerical update $v^{n + 1}$ for stiffly-accurate type~II methods reads
  \begin{equation}
    \begin{aligned}
      v^{n + 1} &= v^n - \Delta t \left( \tilde\beta \left(\frac{1- \delta_2 \eps}{\eps^2} u_x^{(1)} - \frac{1}{\eps^2} w^{(1)}\right) + \sum_{i = 2}^{s} b_{i-1} \left(\frac{1- \delta_2 \eps}{\eps^2} u_x^{(i)} - \frac{1}{\eps^2} w^{(i)}\right)\right)\\
                &- \Delta t \left(\widehat{\widetilde{\beta}} \left(\frac{\delta_2}{\eps} u_x^{(1)}\right) + \sum_{i = 2}^{s} \widehat{\widetilde{b}}_{i-1} \frac{\delta_2}{\eps} u_x^{(i)}\right).
    \end{aligned}
  \end{equation}
  The dominating $\epsilon^{-2}$ terms are
  \begin{equation}
  \begin{aligned}
    0 = \tilde \beta \left(u_x^{(1)} -  w^{(1)}\right) + \sum_{i = 2}^{s} b_{i-1} \left( u_x^{(i)} -  w^{(i)}\right).
  \end{aligned}
  \end{equation}
  This equation is satisfied trivially, since the asymptotics are already obtained by the stages. In order to guarantee
  \begin{equation}
    w^{n+1}_0 = {\left(u^{n+1}_{0}\right)}_{x},
  \end{equation}
  we need the FSAL condition to be fulfilled in the explicit submethod as well.  Similar arguments yield
  the same result for type~I methods.
\end{remark}

\section{Splitting the BBMH system for IMEX methods: discrete in space}\label{sec:discrete_in_space}

To conserve the quantities $\IBBM$ and $\IBBMH$ (see equations~\eqref{eq:I_BBM} and~\eqref{eq:BBMH_total}, respectively)
after a discretization in space, we use SBP operators and split forms of the equations.

\subsection{Summation-by-parts operators}
The conservation of a quantity $\mathcal{I}$ by solutions of a PDE can be expressed as
\begin{equation}
  \frac{d}{dt} \mathcal{I}(u(x,t)) = 0,
\end{equation}
which in our case is an integral equation. To ensure the discrete conservation of these quantities we will use a discrete
analog of integration by parts leading to a concept referred to as summation by parts.
This also means that we can only use integration by parts to show that a quantity is conserved analytically in order for this approach
to work.
This might require rewriting the PDE in such a way that the terms cancel out exactly when using integration by parts on the analytic level.
Since we are only considering periodic boundary conditions it suffices to only consider periodic SBP operators. We follow a combined approach of~\cite{ranocha2021broad} and \cite{Mattson_2017_SBP}.
\begin{definition}
  A periodic first-derivative SBP operator consists of a grid $\boldsymbol{x}$, a consistent first-derivative operator $D_1$, and a symmetric
  and positive matrix $M$ such that
  \begin{equation}
    \label{eq:periodic_SBP}
      M D_1 + D_1^T M = 0.
    \end{equation}
\end{definition}
A special case of SBP operators are the upwind operators, which are defined in
\begin{definition}
  A periodic first-derivative upwind SBP operator consists of a grid $\boldsymbol{x}$, consistent first-derivative operators $D_{\pm}$,
  and a symmetric and positive-definite matrix $M$ such that
  \begin{equation}
    \label{eq:periodic_upwind_SBP}
      M D_+ + D_-^T M = 0.
    \end{equation}
  Furthermore, the matrix
  \begin{equation}
    \frac{1}{2}M(D_+ - D_-)
  \end{equation}
  is negative semidefinite.
\end{definition}

\subsection{The BBM equation}

We use a conservative semidiscretization of the BBM equation~\eqref{eq:BBM}
developed in \cite{ranocha2021broad} for the variant with an additional linear
term $+u_x$. Adapted to our setting and using upwind SBP operators $D_\pm$ and
the induced central operator $D_1 = (D_+ + D_-)/2$, we obtain the semidiscretization
\begin{equation}
  \label{eq:bbm_semidiscretization}
    \partial_t \vec{\uBBM} = -\frac{1}{3} (\I - D_+ D_-)^{-1} \left(
      \vec{\uBBM} D_1 \vec{\uBBM} + D_1 \vec{\uBBM}^2
    \right),
\end{equation}
which ensures the discrete conservation of the conserved quantities. We have
\begin{theorem}[Variant of Theorem~4.1 of~\cite{ranocha2021broad}]
  The semidiscretization~\eqref{eq:bbm_semidiscretization} of
  the BBM equation~\eqref{eq:BBM} conserves the invariants
  \begin{equation}
    \vec{1}^T M \vec{u} \approx \int u,
    \qquad
    \frac{1}{2} \vec{u}^T M (\I - D_+ D_-) \vec{u} \approx \frac{1}{2} \int (u^2 + u_x^2),
  \end{equation}
  if $D_\pm$ are periodic first-derivative upwind SBP operators
  with diagonal mass matrix $M$ and $D_1 = (D_+ + D_-)/2$.
\end{theorem}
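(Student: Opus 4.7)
The plan is to differentiate each invariant in time, substitute the semidiscretization~\eqref{eq:bbm_semidiscretization}, and reduce the result to zero using a small collection of SBP identities together with the algebraic cancellation built into the skew-symmetric split form $\vec{u} D_1 \vec{u} + D_1 \vec{u}^2$. Before doing any computation I would first collect the relevant identities. From $M D_+ + D_-^T M = 0$ and its transpose (using $M=M^T$) one obtains $M D_- + D_+^T M = 0$, and averaging gives
\begin{equation*}
M D_1 + D_1^T M = 0,
\end{equation*}
so $D_1$ is a periodic SBP operator. Multiplying the upwind identity by $D_-$ on the right yields $M D_+ D_- = -D_-^T M D_-$, which is symmetric; hence $M(\I - D_+ D_-)$ is symmetric, and in fact SPD because $\vec{u}^T M(\I - D_+ D_-)\vec{u} = \vec{u}^T M \vec{u} + (D_-\vec{u})^T M (D_-\vec{u})$. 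Finally, consistency of $D_\pm$ with constants gives $D_\pm \vec{1} = 0$, hence $D_1 \vec{1} = 0$ and $(\I - D_+ D_-)\vec{1} = \vec{1}$.

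For the mass-type invariant I would compute
\begin{equation*}
\frac{d}{dt}\bigl(\vec{1}^T M \vec{u}\bigr) = -\frac{1}{3}\,\vec{1}^T M (\I - D_+ D_-)^{-1}\bigl(\vec{u}\, D_1 \vec{u} + D_1 \vec{u}^2\bigr).
\end{equation*}
Symmetry of $M(\I - D_+ D_-)$ gives $\vec{1}^T M (\I - D_+ D_-)^{-1} = \bigl((\I - D_+ D_-)^{-1}\vec{1}\bigr)^T M = \vec{1}^T M$. The remaining two contributions both vanish by SBP applied to $D_1$: the term $\vec{1}^T M D_1 \vec{u}^2$ equals $-(D_1\vec{1})^T M \vec{u}^2 = 0$, and since $M$ is diagonal the pointwise product satisfies $\vec{1}^T M (\vec{u}\, D_1 \vec{u}) = \vec{u}^T M D_1 \vec{u} = \tfrac{1}{2}\vec{u}^T (M D_1 + D_1^T M)\vec{u} = 0$.

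For the quadratic/energy invariant I would use the symmetry of $M(\I - D_+ D_-)$ to write
\begin{equation*}
\frac{d}{dt}\Bigl(\tfrac{1}{2} \vec{u}^T M (\I - D_+ D_-) \vec{u}\Bigr) = \vec{u}^T M (\I - D_+ D_-) \partial_t \vec{u} = -\tfrac{1}{3} \vec{u}^T M \bigl(\vec{u}\, D_1 \vec{u} + D_1 \vec{u}^2\bigr),
\end{equation*}
where the elliptic factor has cancelled its inverse exactly because $M(\I - D_+ D_-)$ is symmetric. I then want to show that the two cubic contributions cancel. Since $M$ is diagonal it commutes with multiplication by $\vec{u}$, so $\vec{u}^T M (\vec{u}\, D_1 \vec{u}) = (\vec{u}^2)^T M D_1 \vec{u}$. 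On the other hand, using $M D_1 = -D_1^T M$ and symmetry of $M$, $\vec{u}^T M D_1 \vec{u}^2 = -(D_1 \vec{u})^T M \vec{u}^2 = -(\vec{u}^2)^T M D_1 \vec{u}$. The two terms are exact negatives of one another, so the sum vanishes.

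The main obstacle — and the reason the split form is chosen in precisely this way — is the cancellation of the cubic term: it relies on the diagonality of $M$ (so that the Hadamard-type product $\vec{u}\, D_1 \vec{u}$ can be moved past $M$) combined with the SBP identity for $D_1$. Once these two ingredients are in place, both invariants follow from short manipulations; transcribing them carefully, with attention to transposes and to the fact that $(\I - D_+ D_-)^{-1}$ inherits the symmetry of $M$-weighted $(\I - D_+ D_-)$, is the only bookkeeping required.
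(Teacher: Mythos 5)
Your proof is correct and follows exactly the intended route: the paper itself only writes ``use similar arguments as in the proof of Theorem~4.1 of~\cite{ranocha2021broad},'' and the argument you supply --- symmetry and positive definiteness of $M(\I - D_+D_-)$ from the upwind SBP property, $(\I - D_+D_-)^{-1}\vec{1} = \vec{1}$ for the mass, exact cancellation of the elliptic operator against its inverse for the energy, and the skew-symmetry of $MD_1$ together with diagonality of $M$ to kill the split cubic term --- is precisely the standard computation, mirroring the detailed proof the paper does give for the analogous BBMH result (Theorem~\ref{thm:SD_BBMH}). No gaps.
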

\begin{proof}
  Use similar arguments as in the proof of Theorem~4.1 in~\cite{ranocha2021broad}.
\end{proof}

\subsection{The BBMH system}

To discretize the splitting~\eqref{eq:splitting-family} of the
BBMH system in space we use periodic upwind SBP operators $D_\pm$ as
\begin{equation}
\label{eq:bbmh_semidiscretization}
  \partial_t
  \underbrace{\begin{pmatrix}
    \vec{u} \\
    \vec{v} \\
    \vec{w}
  \end{pmatrix}}_{= \vec{q}}
  =
  \underbrace{\begin{pmatrix}
  - \frac{1}{3} \left(\vec{u} D_1 \vec{u} + D_1 \vec{u}^2\right) - \delta_1 \eps D_+ \vec{v}\\
  -\frac{\delta_2}{\eps} D_- \vec{u}\\
  -\delta_3 \eps^2 D_1 \vec{w}\\
  \end{pmatrix}}_{= f(\vec{q})}
  + \underbrace{\begin{pmatrix}
      -(1 - \delta_1 \eps) D_+ \vec{v}\\
      -\frac{1-\delta_2 \eps}{\eps^2} D_- \vec{u} + \frac{1}{\eps^2} \vec{w}\\
      -(1- \delta_3)\eps^2 D_1 \vec{w} - \vec{v}
  \end{pmatrix}}_{= g(\vec{q})}.
\end{equation}

\begin{theorem}\label{thm:SD_BBMH}
  The semidiscretization~\eqref{eq:bbmh_semidiscretization} of the
  BBMH system~\eqref{eq:BBMH} conserves the invariants
  \begin{equation}
    \vec{1}^T M \vec{u} \approx \int u,
    \qquad
    \frac{1}{2} \left(
      \vec{u}^T M \vec{u}
      + \epsilon^2 \vec{v}^T M \vec{v}
      + \vec{w}^T M \vec{w}
    \right) \approx \frac{1}{2} \int (u^2 + \epsilon^2 v^2 + w^2),
  \end{equation}
  if $D_\pm$ are periodic first-derivative SBP operators with
  diagonal mass matrix $M$ and induced central operator
  $D_1 = (D_+ + D_-) / 2$.
\end{theorem}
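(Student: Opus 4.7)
The plan is to differentiate each invariant along the semidiscrete flow and show that every resulting contribution either cancels by an SBP identity or telescopes to zero. A preliminary observation that simplifies the bookkeeping is that when we add $f(\vec{q})$ and $g(\vec{q})$ in \eqref{eq:bbmh_semidiscretization}, the $\delta_i$-dependent pieces combine linearly: the $\vec{u}$-equation reduces to $\partial_t \vec{u} = -\tfrac{1}{3}(\vec{u} D_1 \vec{u} + D_1 \vec{u}^2) - D_+ \vec{v}$, the $\vec{v}$-equation to $\partial_t \vec{v} = -\eps^{-2} D_- \vec{u} + \eps^{-2} \vec{w}$, and the $\vec{w}$-equation to $\partial_t \vec{w} = -\eps^2 D_1 \vec{w} - \vec{v}$. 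So I need to verify conservation only for this total system, independently of the splitting parameters.

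For the linear invariant $\vec{1}^T M \vec{u}$, I would compute $\vec{1}^T M \partial_t \vec{u}$ and dispose of each term using two facts: consistency gives $D_\pm \vec{1} = 0$ (hence also $D_1 \vec{1} = 0$), and the SBP relations $M D_+ = -D_-^T M$, $M D_- = -D_+^T M$, together with the induced $M D_1 = -D_1^T M$, allow me to move derivatives off the test vector $\vec{1}$. The linear piece $\vec{1}^T M D_+ \vec{v} = -(D_- \vec{1})^T M \vec{v} = 0$ is immediate. For the split convective term, $\vec{1}^T M (D_1 \vec{u}^2) = 0$ by the same trick, while $\vec{1}^T M (\vec{u} D_1 \vec{u}) = \vec{u}^T M D_1 \vec{u}$ (using that $M$ is diagonal so multiplication by $\vec{u}$ commutes with $M$) which vanishes by skewness of $M D_1$.

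For the quadratic invariant $E = \tfrac{1}{2}(\vec{u}^T M \vec{u} + \eps^2 \vec{v}^T M \vec{v} + \vec{w}^T M \vec{w})$, I would write $\dot{E} = \vec{u}^T M \partial_t \vec{u} + \eps^2 \vec{v}^T M \partial_t \vec{v} + \vec{w}^T M \partial_t \vec{w}$ and group the contributions into (i) the nonlinear piece, (ii) the $\vec{u}$--$\vec{v}$ coupling, (iii) the $\vec{v}$--$\vec{w}$ coupling, and (iv) the $\eps^2 D_1 \vec{w}$ self-term. Piece (i) yields $-\tfrac{1}{3}[\vec{u}^T M (\vec{u} D_1 \vec{u}) + \vec{u}^T M D_1 \vec{u}^2]$; since $M$ is diagonal, the first summand equals $\sum_i M_{ii} u_i^2 (D_1 \vec u)_i$, while the second, by skewness of $M D_1$, equals $-(D_1 \vec{u})^T M \vec{u}^2 = -\sum_i M_{ii} (D_1 \vec u)_i u_i^2$. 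The two are exact negatives, so (i) vanishes. Piece (iv) is $-\eps^2 \vec{w}^T M D_1 \vec{w} = 0$ by skewness of $MD_1$. Piece (iii) contributes $\vec{v}^T M \vec{w} - \vec{w}^T M \vec{v} = 0$ by symmetry of $M$.

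The crux is piece (ii): the surviving cross terms are $-\vec{u}^T M D_+ \vec{v} - \vec{v}^T M D_- \vec{u}$. Applying the upwind SBP identity in the form $M D_- = -D_+^T M$ to the second term gives $-\vec{v}^T M D_- \vec{u} = \vec{v}^T D_+^T M \vec{u} = (D_+ \vec{v})^T M \vec{u} = \vec{u}^T M D_+ \vec{v}$, which exactly cancels the first. This is the only place where the specific pairing $D_+$ in the $\vec{u}$-flux against $D_-$ in the $\vec{v}$-flux matters, and it is the main structural obstruction — one had to choose the upwind operators with opposite biases on either side of the skew coupling so that the duality pairing $M D_+ + D_-^T M = 0$ delivers the cancellation. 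Everything else is mechanical bookkeeping.
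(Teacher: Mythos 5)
Your proposal is correct and follows essentially the same route as the paper's proof: differentiate each invariant, kill the nonlinear split-form and $D_1$ self-terms by skew-symmetry of $MD_1$ and diagonality of $M$, cancel the $\vec{v}$--$\vec{w}$ coupling by symmetry of $M$, and cancel the $\vec{u}$--$\vec{v}$ cross terms via the upwind identity $MD_+ + D_-^T M = 0$. The only cosmetic difference is that you sum $f+g$ first so the $\delta_i$-dependence drops out at the outset, whereas the paper carries the $\delta_i$ terms through and cancels them explicitly.
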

\begin{proof}
  We have
  \begin{equation}
  \begin{aligned}
    \vec{1}^T M \partial_t \vec{u}
    &=
    - \vec{1}^T (1 - \delta_1\epsilon) M D_+ \vec{v}
    - \frac{1}{3} \vec{1}^T M (\vec{u} D_1 \vec{u} + D_1 \vec{u}^2)
    - \delta_1\epsilon \vec{1}^T M D_+ \vec{v}\\
    &=
    + (1 - \delta_1\epsilon) \vec{1}^T D_+^T M \vec{v}
    - \frac{1}{3} \left[\vec{u}^T M D_1 \vec{u} + \vec{1}^T M D_1 \vec{u}^2\right]
    + \delta_1\epsilon \vec{1}^T D_+^T M\vec{v}\\
    &=
    + (1 - \delta_1\epsilon)  \bigl(\underbrace{D_+ \vec{1} }_{= \vec{0}}\bigr)^T M \vec{v}
    - \frac{1}{3} \left[-\left(D_1\vec{u}\right)^T M \vec{u} - \left(\underbrace{D_1\vec{1}}_{ = 0}\right)^T M \vec{u}^2\right]
    + \delta_1\epsilon \bigl(\underbrace{D_+ \vec{1}}_{= \vec{0}}\bigr)^T M\vec{v}\\
    &=
	    - \frac{1}{3} \left[-\vec{u}^T M D_1\vec{u} \right]
	  =
		0,
  \end{aligned}
  \end{equation}
  where we have used the SBP property and the symmetry of the mass matrix $M$.
To show the invariance of the second quantity we observe
\begin{equation}
  \begin{aligned}
    \vec{u}^T M \partial_t \vec{u} &= \vec{u}^T M \left[-\frac{1}{3} (\vec{u} D_1 \vec{u} + D_1 \vec{u}^2) - \delta_1 \eps D_+ \vec{v}\right]
                                      + \vec{u}^T M \left[-(1 - \delta_1 \eps) D_+ v\right]\\
                                   &= \underbrace{-\frac{1}{3} (\vec{u}^2)^T MD_1 \vec{u} - \frac{1}{3} \vec{u}^T M D_1 \vec{u}^2}_{= 0} - \delta_1 \eps \vec{u}^T M D_+ v - (1 - \delta_1 \eps) \vec{u}^T M D_+ \vec{v}\\
                                   &=  -\vec{u}^T M D_+ \vec{v},
  \end{aligned}
\end{equation}
where we have used the SBP property. Furthermore, we have
\begin{equation}
  \begin{aligned}
    \eps^2 \vec{v}^T M \partial_t \vec{v} &= \eps^2 \vec{v}^T M [-\frac{\delta_2}{\eps} D_- \vec{u}] + \eps^2 \vec{v}^T M \left[- \frac{1 - \delta_2 \eps}{\eps^2} D_- \vec{u} + \frac{1}{\eps^2} \vec{w}\right]\\
                                          &= -\vec{v}^T M D_- \vec{u} + \vec{v}^T M \vec{w}.
  \end{aligned}
\end{equation}
Finally, we compute
\begin{equation}
  \begin{aligned}
    \vec{w}^T M \partial_t \vec{w} &= \vec{w}^T M [-\delta_3 \eps^2 D_1 \vec{w}] + \vec{w}^T M [-(1 - \delta_3)\eps^2 D_1 \vec{w} - \vec{v}]\\
                                   &= -\delta_3 \eps^2 \vec{w}^T M D_1 \vec{w} - \eps^2 \vec{w}^T M D_1 \vec{w} + \delta_3 \eps^2 \vec{w}^T M D_1 \vec{w} - \vec{w}^T M \vec{v}\\
                                   &= - \eps^2 \vec{w}^T M D_1 \vec{w} - \vec{w}^T M \vec{v}.
  \end{aligned}
\end{equation}
Combining these three observations leads to
\begin{equation}
  \begin{aligned}
    &\quad
    \vec{u}^T M \partial_t \vec{u}
    + \epsilon^2 \vec{v}^T M \partial_t \vec{v}
    + \vec{w}^T M \partial_t \vec{w}\\
   &= -\vec{u}^T M D_+ \vec{v} - \vec{v}^T M D_- \vec{u} + \vec{v}^T M \vec{w} - \eps^2 \vec{w}^T M D_1 \vec{w} - \vec{w}^T M \vec{v}
   = 0,
  \end{aligned}
\end{equation}
where we have used the SBP property.
\end{proof}
Since we want to study the AP property of the BBMH system discretely we have to ensure that we recover a sensible discretization for the limit
$\epsilon \rightarrow 0$. We study this behavior for both type~I and type~II methods and state
\begin{theorem}
  Given the BBMH semidiscretization~\eqref{eq:bbmh_semidiscretization} initialized with
  $u(x,0) = \eta(x,0)$, $w(x,0) = -\eta_x(x,0)$ and $v(x,0) = v_0(x;\eps)$, where $v_0$ is a sufficiently smooth function which may depend on $\eps$ in an arbitrary way,
  every GSA IMEX method of type I and II is AP for the components $u$ and $w$,
  i.e.
  \begin{equation}
    \vec{u}^n \rightarrow \vec{\eta}^n, \qquad
    \vec{w}^n \rightarrow D_- \vec{\eta}^n.
  \end{equation}
  Furthermore, GSA IMEX type I methods and GSA IMEX method of type~II with the ARS property are AP in the $v$ component, i.e.
  \begin{equation}
    \vec{v}^n \rightarrow -D_t D_1 \vec{\eta}^n,
  \end{equation}
  where the discrete derivative $D_t$ corresponds to equation~\eqref{eq:thmAPI_Dt_v} and equation~\eqref{eq:APII_def_aplimit_v} respectively.
\end{theorem}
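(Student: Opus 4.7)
The plan is to mirror the proofs of Theorems~\ref{thm:typeIAP} and~\ref{thm:typeIIAP} line by line, but with every continuous derivative replaced by its appropriate SBP counterpart from the semidiscretization~\eqref{eq:bbmh_semidiscretization}: $v_x \mapsto D_+\vec{v}$ in the $u$-equation, $u_x \mapsto D_-\vec{u}$ in the $v$-equation, and $w_x \mapsto D_1 \vec{w}$ in the $w$-equation. The strategy is once again a Hilbert expansion in $\eps$ of the IMEX stage and update equations, using invertibility of the implicit Butcher matrix together with the GSA (and, for type~II, ARS) property to pass from stage identities to the fully updated time step.

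I would begin with the $v$-stage equation. At order $\eps^{-2}$ the only surviving contribution is proportional to $\tIAim(D_-\pmb{u}_0 - \pmb{w}_0)$ for type~I, respectively $\widehat{A}(D_-\pmb{u}_0 - \pmb{w}_0)$ for type~II (using $u^{(1)}=u^n$ and the well-prepared data $\vec{w}^n_0 = D_-\vec{u}^n_0$ to eliminate the boundary stage). Invertibility then forces $\pmb{w}_0 = D_-\pmb{u}_0$ on every stage; the GSA property (equivalently FSAL on the explicit side plus stiff accuracy on the implicit side) transports this to $\vec{w}^{n+1}_0 = D_-\vec{u}^{n+1}_0$, giving AP in $\vec{w}$.

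Next I would handle $\vec{u}$. Substituting $\pmb{w}_0 = D_- \pmb{u}_0$ into the leading order of the $w$-stage equation yields $\tIAim\pmb{v}_0 = -(\pmb{w}_0 - w^n_0\pmb{1})/\Delta t$, so $\pmb{v}_0 = -\tIAim^{-1} D_-(\pmb{u}_0 - u^n_0\pmb{1})/\Delta t$. Feeding this into the leading order of the $u$-stage equation produces $D_+$ acting on $\pmb{v}_0$, so the combination $D_+ D_-$ appears precisely where the elliptic operator $(\I - D_+ D_-)$ sits in the BBM semidiscretization~\eqref{eq:bbm_semidiscretization}. The stage system therefore collapses to
\begin{equation*}
  (\I - D_+ D_-)\left(\frac{\pmb{u}_0 - u_0^n \pmb{1}}{\Delta t}\right) = -\tIAex\cdot\frac{1}{3}\bigl(\pmb{u}_0\,D_1\pmb{u}_0 + D_1 \pmb{u}_0^2\bigr),
\end{equation*}
which is exactly the IMEX scheme applied to the semidiscrete BBM equation. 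Invoking GSA once more identifies $u_0^{n+1}$ with $u_0^{(s)}$, and AP in $\vec{u}$ follows. The $\vec{v}$-component is then read off from the last stage as in the continuous proof, producing the $D_t$ given by~\eqref{eq:thmAPI_Dt_v} for type~I and by~\eqref{eq:APII_def_aplimit_v} for ARS type~II.

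The main obstacle is bookkeeping: one has to check that the \emph{upwind} choices made in~\eqref{eq:bbmh_semidiscretization} ($D_+$ on $\vec{v}$ and $D_-$ on $\vec{u}$) compose to give exactly the operator $D_+ D_-$ appearing in the BBM semidiscretization~\eqref{eq:bbm_semidiscretization}, and that the nonlinear split form $\tfrac{1}{3}(\vec{u}\,D_1\vec{u} + D_1\vec{u}^2)$ passes through the Hilbert expansion unchanged so that the limiting scheme is literally~\eqref{eq:bbm_semidiscretization}. Apart from this algebraic verification, which works because $D_\pm$ and $D_1$ are independent of $\eps$ and the split form is linear in the stage vector once leading-order products are resolved, the proof is a direct transcription of Theorems~\ref{thm:typeIAP} and~\ref{thm:typeIIAP}.
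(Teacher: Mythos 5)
Your proposal is correct and follows exactly the route the paper intends: the paper's own proof consists solely of the remark ``use similar arguments as in the proofs of Theorems~\ref{thm:typeIAP} and~\ref{thm:typeIIAP},'' and you carry out precisely that transcription, correctly identifying the one nontrivial check --- that the $D_+$ acting on $\vec{v}$ composed with the $D_-$ from the $v$-stage relation reproduces the operator $\I - D_+ D_-$ of the BBM semidiscretization~\eqref{eq:bbm_semidiscretization}. No gaps; your write-up is in fact more detailed than the paper's.
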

\begin{proof}
  Use similar arguments as in the proofs of Theorems~\ref{thm:typeIAP} and~\ref{thm:typeIIAP}.
\end{proof}

\section{Numerical experiments}\label{sec:numerical_experiments}
In this section we conduct numerical tests that confirm the conservation and AP properties
of the proposed methods.  Additionally, we show experimentally that these methods exhibit
greatly improved error long-time growth compared to non-conservative methods.

We used Python with the libraries \texttt{numpy} \cite{harris2020array}, \texttt{scipy} \cite{virtanen2020scipy}, and \texttt{matplotlib} \cite{hunter2007matplotlib} for the traveling wave analysis, e.g., Figure~\ref{fig:peakon}.
The remaining numerical tests were performed using Julia \cite{bezanson2017julia} and SummationByPartsOperators.jl \cite{ranocha2021sbp}. For sparse linear systems, we use
UMFPACK from SuiteSparse \cite{davis2004umfpack,amestoy2004amd,davis2004colamd}
available from the Julia standard library.
All code and data required to reproduce the numerical results are available in our reproducibility repository \cite{bleecke2025asymptoticRepro}.

\subsection{Asymptotic preserving properties}\label{sec:num_test_AP}
To illustrate the results of Theorems~\ref{thm:typeIAP} and \ref{thm:typeIIAP},
we apply various IMEX methods to the semidiscretization~\eqref{eq:bbmh_semidiscretization} of the BBMH system.
For $u$ and $w$ we use well-prepared initial data as required by theorems~\ref{thm:typeIAP} and \ref{thm:typeIIAP}.
For $v$, we experiment by trying first the well-prepared initial condition
$v(x,0) = c D_1^2 \uBBM(x,0)$ (where $D_1$ is the discrete central difference
operator) and then the simple non-well-prepared choice $v(x,0)=0$.
We conduct our numerical tests with the following Runge-Kutta IMEX methods
\begin{itemize}
  \item \texttt{AGSA342}: A 2nd order GSA IMEX method of type I, see~\cite{boscarino2013}
  \item \texttt{SPIMEX322}: A 2nd order IMEX method of type I. It admits to a SA but not a GSA structure, see~\cite{Pareschi2005}
  \item \texttt{ARS443}: A 3rd order GSA IMEX method of type~II, see~\cite{ASCHER1997151}
  \item \texttt{BPR343}: A 3rd order GSA IMEX scheme of type~II that does not fulfill the ARS property, see~\cite{boscarino2013IMEX}
\end{itemize}
For the corresponding Butcher tableaux of these IMEX methods refer to the appendix of~\cite{boscarino2024asymptotic}.
We compute the $L^2$-norm of the error for each component $u,v,w$ of the BBMH system compared to the solution and
derivatives of the BBM equation ($\uBBM, -\uBBM_{tx}, \uBBM_x$).
For the parameters of the test we have used a finite difference approximation with $N = 2^9$ grid points using a time step of $\Delta t = 0.01$ and solve on the time interval
$[0,19.5]$.

The type I method \texttt{AGSA342} and type~II method \texttt{ARS443} fulfill all conditions of theorems~\ref{thm:typeIAP} and~\ref{thm:typeIIAP} respectively.
Since in both theorems we stated that the initial condition of $v$ does not need to be well-prepared we additionally test $v = 0$ as an initial condition.
The results for \texttt{AGSA342} are shown in Tables~\ref{tab:APtest_AGSA342} and~\ref{tab:APtest_AGSA342v0} respectively. The corresponding results for the \texttt{ARS443} method
are shown in Tables~\ref{tab:APtest_ARS443} and~\ref{tab:APtest_ARS443v0} respectively.
We observe that the error decreases in all four considered cases with a numerical convergence rate of around $1$, in essence showing that in this case all components are AP.
The results for the type I method \texttt{SPIMEX322} are shown in Table~\ref{tab:APtest_SPIMEX332}. Since this method does not admit to the GSA structure, which was one of the assumptions of theorem~\ref{thm:typeIAP},
we observe no convergence towards the numerical BBM solution.
Finally, the type~II method \texttt{BPR343} does admit to the GSA property but does not admit to the ARS property.
Therefore, it converges in the components $u$ and $w$ but not in the $v$ component, see Table~\ref{tab:APtest_BPR343}.

\begin{table}[htbp]
  \centering
  \caption{Error of the IMEX type I \texttt{AGSA342} (GSA) method with $N = 2^9$ grid points and $\Delta t = 0.01$ with initial condition $v(x,0) = c D_1^2 \uBBM$}
\begin{tabular}{ccccccc}
  \hline
  \textbf{$\varepsilon^2$} & \textbf{L2 error u} & \textbf{L2 EOC u} & \textbf{L2 error v} & \textbf{L2 EOC v} & \textbf{L2 error w} & \textbf{L2 EOC w} \\\hline
  1.00e-02 & 3.82e-03 &   & 1.82e-03 &   & 1.84e-03 &  \\
  1.00e-04 & 4.19e-05 & 0.98 & 3.04e-05 & 0.89 & 1.93e-05 & 0.99 \\
  1.00e-06 & 3.55e-06 & 0.54 & 1.18e-06 & 0.70 & 1.78e-06 & 0.52 \\
  1.00e-08 & 1.02e-07 & 0.77 & 3.24e-08 & 0.78 & 5.01e-08 & 0.78 \\
  1.00e-10 & 1.04e-09 & 1.00 & 3.35e-10 & 0.99 & 5.10e-10 & 1.00 \\\hline
\end{tabular}
\label{tab:APtest_AGSA342}
\end{table}

\begin{table}[htbp]
  \centering
  \caption{Error of the IMEX type I \texttt{AGSA342} method with $N = 2^9$ grid points and $\Delta t = 0.01$ with initial condition $v(x,0) = 0$}
\begin{tabular}{ccccccc}
  \hline
  \textbf{$\varepsilon^2$} & \textbf{L2 error u} & \textbf{L2 EOC u} & \textbf{L2 error v} & \textbf{L2 EOC v} & \textbf{L2 error w} & \textbf{L2 EOC w} \\\hline
  1.00e-02 & 3.98e-03 &   & 2.60e-02 &  & 3.20e-03 &  \\
  1.00e-04 & 4.22e-05 & 0.99 & 3.05e-05 & 1.46 & 1.94e-05 & 1.11 \\
  1.00e-06 & 3.51e-06 & 0.54 & 1.18e-06 & 0.71 & 1.77e-06 & 0.52 \\
  1.00e-08 & 1.01e-07 & 0.77 & 3.23e-08 & 0.78 & 4.97e-08 & 0.78 \\
  1.00e-10 & 1.03e-09 & 1.00 & 3.34e-10 & 0.99 & 5.06e-10 & 1.00 \\\hline
\end{tabular}
\label{tab:APtest_AGSA342v0}
\end{table}

\begin{table}[htbp]
  \centering
    \caption{Error of the IMEX type~II method \texttt{ARS443} (GSA, ARS)  with $N = 2^9$ grid points and $\Delta t = 0.01$ with initial condition $v(x,0) = c D_1^2 \uBBM$}
\begin{tabular}{ccccccc}
  \hline
  \textbf{$\varepsilon^2$} & \textbf{L2 error u} & \textbf{L2 EOC u} & \textbf{L2 error v} & \textbf{L2 EOC v} & \textbf{L2 error w} & \textbf{L2 EOC w} \\\hline
  1.00e-02 & 3.71e-03 &   & 3.94e-03 &   & 1.83e-03 &  \\
  1.00e-04 & 3.79e-05 & 1.00 & 2.69e-04 & 0.58 & 1.84e-05 & 1.00 \\
  1.00e-06 & 2.64e-07 & 1.08 & 1.03e-04 & 0.21 & 1.95e-07 & 0.99 \\
  1.00e-08 & 2.64e-09 & 1.00 & 1.52e-06 & 0.92 & 2.11e-09 & 0.98 \\
  1.00e-10 & 2.89e-11 & 0.98 & 1.53e-08 & 1.00 & 2.93e-11 & 0.93 \\\hline
\end{tabular}
\label{tab:APtest_ARS443}
\end{table}

\begin{table}[htbp]
  \centering
    \caption{Error of the IMEX type~II \texttt{ARS443} (GSA, ARS) method with $N = 2^9$ grid points and $\Delta t = 0.01$ with initial condition $v(x,0) = 0$}
\begin{tabular}{ccccccc}
  \hline
  \textbf{$\varepsilon^2$} & \textbf{L2 error u} & \textbf{L2 EOC u} & \textbf{L2 error v} & \textbf{L2 EOC v} & \textbf{L2 error w} & \textbf{L2 EOC w} \\\hline
  1.00e-02 & 4.96e-03 &   & 8.46e-02 &   & 7.29e-03 &  \\
  1.00e-04 & 3.82e-05 & 1.06 & 2.69e-04 & 1.25 & 1.85e-05 & 1.30 \\
  1.00e-06 & 2.64e-07 & 1.08 & 1.03e-04 & 0.21 & 1.95e-07 & 0.99 \\
  1.00e-08 & 2.67e-09 & 1.00 & 1.52e-06 & 0.92 & 2.14e-09 & 0.98 \\
  1.00e-10 & 2.92e-11 & 0.98 & 1.53e-08 & 1.00 & 2.95e-11 & 0.93 \\\hline
\end{tabular}
\label{tab:APtest_ARS443v0}
\end{table}

\begin{table}[htbp]
  \centering
  \caption{Error of the IMEX type I \texttt{SPIMEX332} (non GSA) method with $N = 2^9$ grid points and $\Delta t = 0.01$ with initial condition $v(x,0) = c D_1^2 \uBBM$}
\begin{tabular}{ccccccc}
  \hline
  \textbf{$\varepsilon^2$} & \textbf{L2 error u} & \textbf{L2 EOC u} & \textbf{L2 error v} & \textbf{L2 EOC v} & \textbf{L2 error w} & \textbf{L2 EOC w} \\\hline
  1.00e-02 & 3.71e-03 &   & 3.91e-03 &   & 1.83e-03 &  \\
  1.00e-04 & 3.77e-05 & 1.00 & 1.68e-04 & 0.68 & 1.87e-05 & 0.99 \\
  1.00e-06 & 8.35e-07 & 0.83 & 3.18e-04 & -0.14 & 1.40e-06 & 0.56 \\
  1.00e-08 & 1.03e-06 & -0.05 & 8.29e-04 & -0.21 & 1.67e-06 & -0.04 \\
  1.00e-10 & 1.03e-06 & -0.00 & 8.40e-04 & -0.00 & 1.68e-06 & -0.00 \\\hline
\end{tabular}
  \label{tab:APtest_SPIMEX332}
\end{table}

\begin{table}[htbp]
  \centering
  \caption{Error of the IMEX type~II \texttt{BPR343} (GSA, non ARS) method with $N = 2^9$ grid points and $\Delta t = 0.01$ with initial condition $v(x,0) = c D_1^2 \uBBM$}
\begin{tabular}{ccccccc}
  \hline
  \textbf{$\varepsilon^2$} & \textbf{L2 error u} & \textbf{L2 EOC u} & \textbf{L2 error v} & \textbf{L2 EOC v} & \textbf{L2 error w} & \textbf{L2 EOC w} \\\hline
  1.00e-02 & 3.71e-03 & & 3.92e-02 &  & 1.82e-03 &  \\
  1.00e-04 & 3.67e-05 & 1.00 & 3.86e-02 & 0.00 & 1.78e-05 & 1.01 \\
  1.00e-06 & 7.08e-07 & 0.86 & 3.86e-02 & 0.00 & 3.96e-07 & 0.83 \\
  1.00e-08 & 7.91e-09 & 0.98 & 3.86e-02 & -0.00 & 4.45e-09 & 0.97 \\
  1.00e-10 & 8.00e-11 & 1.00 & 3.86e-02 & -0.00 & 4.89e-11 & 0.98 \\\hline
\end{tabular}
  \label{tab:APtest_BPR343}
\end{table}

\subsection{Error growth in time}
It has been shown on theoretical level by~\cite{duran1998numerical} and~\cite{alvarez2012preservation} that conserving the quantity $\IBBM$
leads to linear error growth in comparison to a baseline quadratic error growth.
This has been later verified numerically by~\cite{ranocha2021broad}, when using the concept of numerical time relaxation.
In this section we test if we get the same improved error growth for the BBMH system. Additionally we investigate if this improved error growth
holds asymptotically if $\eps \rightarrow 0$.
Similar to~\cite{ranocha2021broad}, we use relaxation in time in order to conserve the additional SBP discretized quantity $\IBBM$ and $\IBBMH$ respectively, see \cite{Ranocha_2021}.
We first test the improved error growth of the BBMH system itself using a numerical reference solution generated by the Petviashvili method, see Section~\ref{sec:petv_derivation}.
We test for the value of $\eps = 10^{-3}$. We use the \texttt{ARS443} method with a step size $\Delta t = 0.5$ on a grid with $N = 2^8$ points and a 4th order upwind scheme.
In total $7.14$ domain traversals were performed leading to a final time of $t_{end} = 1071$.
The results are shown in Figure~\ref{fig:error_growth_petv}.
We observe that the relaxation method, which conserves $\IBBMH$, leads to a linear error growth while the non-conservative baseline method leads to a quadratic error growth in time.
Finally, we test if we get the same error growth if we use the analytical solution of the BBM equation. If one applies relaxation to the semidiscretization~\eqref{eq:bbm_semidiscretization} of the BBM equation
it is well known that one gets improved error growth, see~\cite{araujo2001error,alvarez2012preservation}.
\begin{figure}[htbp]
  \centering
  \includegraphics[scale = 0.5]{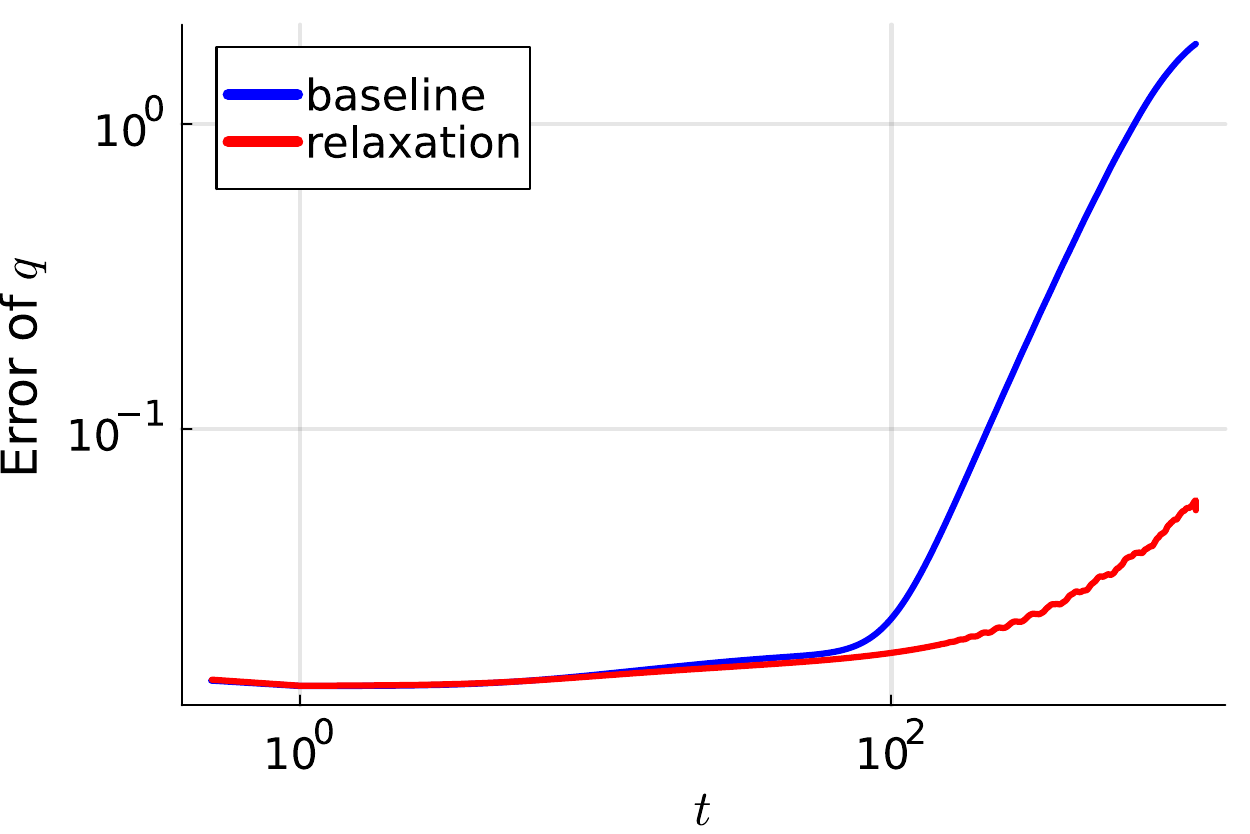}
  \caption{Error growth of the system variable $q$ with respect to the Petviashvili generated solution of the BBMH system for $\eps = 10^{-3}$.}
  \label{fig:error_growth_petv}
\end{figure}
We test the same phenomenon for the BBMH equation for different values of $\eps^2$. As a reference solution we use the analytical solution of the BBM equation given by
\begin{equation}\label{eq:ana_sol_bbm}
  u(x,t) = 1 + \frac{3(c - 1)}{\cosh(K(x - ct))^2},
  \qquad
  K = \frac{1}{2} \sqrt{\frac{c - 1}{c}},
\end{equation}
defined on the interval $[-90, +90]$ and propagated wave speed of $c = 1.2$.
The results are depicted in Figure~\ref{fig:error_growth}. We conduct tests with a range of values of $\eps$;
we show results for $\eps = 10^{-1}$ and $10^{-10}$, which are representative.
We observe that for the two values the error growth is improved when applying the relaxation method in time. For the value of $\eps = 10^{-1}$
we observe that the error of the BBMH is significantly higher than the BBM error growth with and without relaxation. For the second test $\eps = 10^{-10}$
there is no visible difference between the BBMH and BBM error.
\begin{figure}[htbp]
  \centering
  \begin{subfigure}[b]{0.44\textwidth}
    \includegraphics[width=\textwidth]{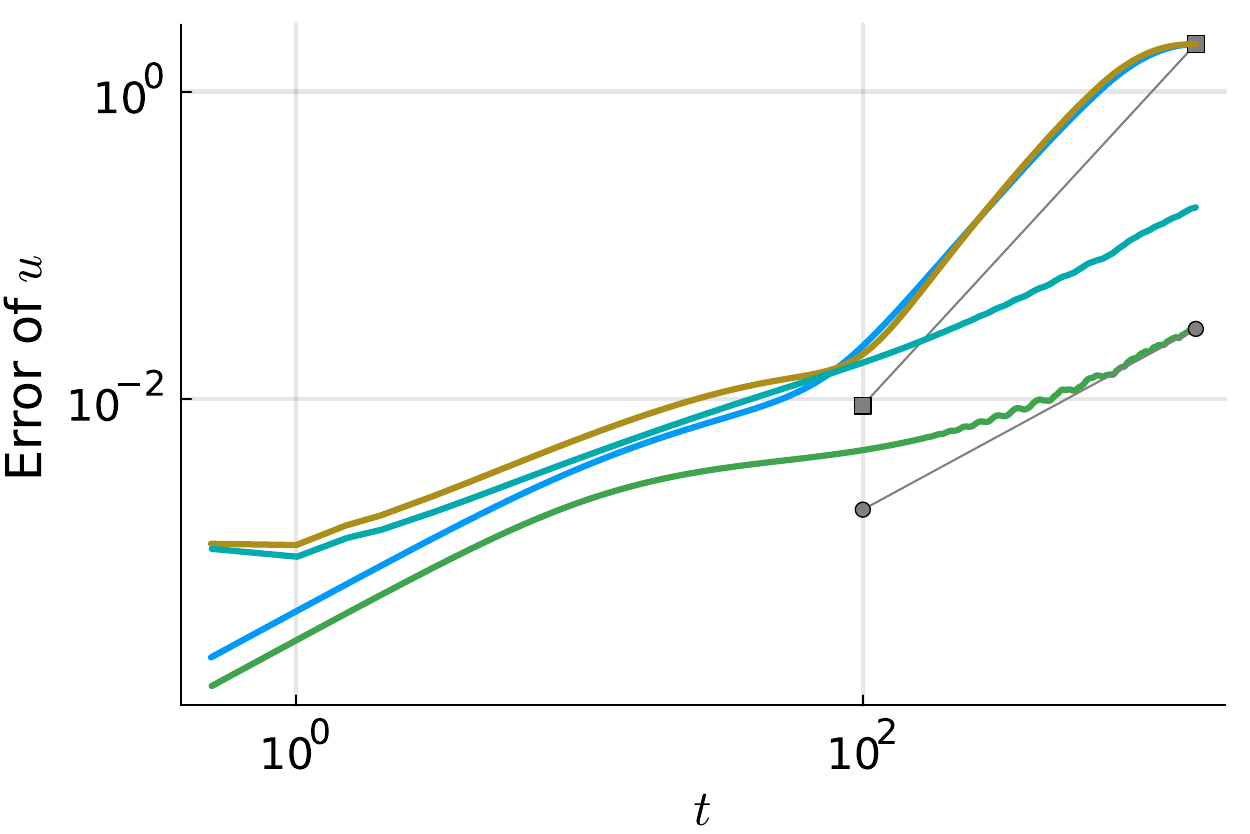}
    \caption{$\eps = 10^{-1}$}
    \label{fig:error_growth_eps1e-1}
  \end{subfigure}
  \hfill
  \begin{subfigure}[b]{0.44\textwidth}
    \includegraphics[width=\textwidth]{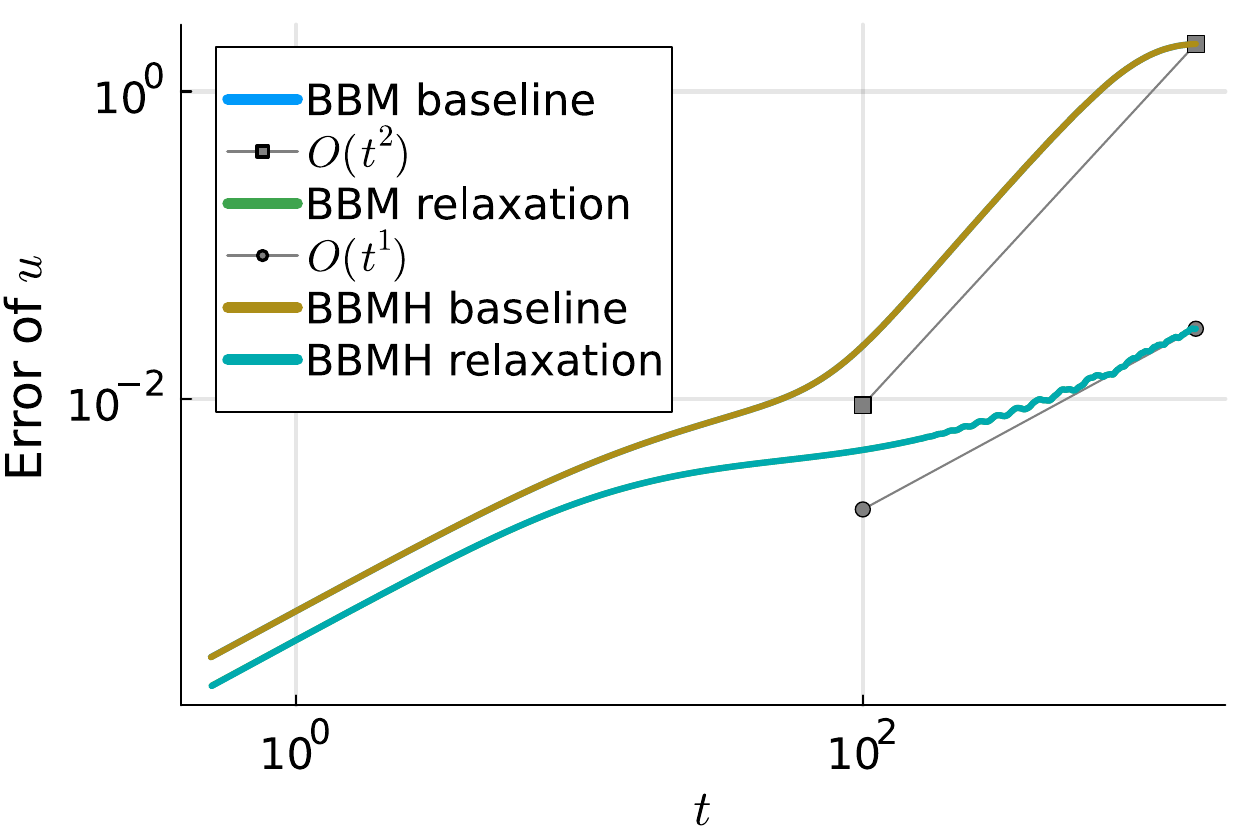}
    \caption{$\eps = 10^{-10}$}
    \label{fig:error_growth_1e-10}
  \end{subfigure}
  \caption{Error growth of the $u$ component with respect to the analytical solution of the BBM equation $\uBBM$.}
  \label{fig:error_growth}
\end{figure}

\section{Summary and conclusions}

In this work, we investigated a hyperbolic approximation of the BBM equation (BBMH), originally introduced by Gavrilyuk~\cite{gavrilyuk2022hyperbolic}.
This approximation offers a new way of solving nonlinear wave equations while using the numerical toolbox of hyperbolic PDEs.
Our numerical approach centered around IMEX time-stepping schemes of type I and II, specifically chosen for their GSA and asymptotic preserving (AP) properties in the limit $\eps \rightarrow 0$.
To ensure structural preservation of the conserved quantity of the BBM equation in discrete form, we applied SBP operators in space.
A reference solution for the BBMH system was computed using the Petviashvili algorithm,
and the numerical experiments validated the theoretical predictions—showing asymptotic consistency and improved long-time error behavior.

Comparing our results based on the hyperbolization of
Gavrilyuk and Shyue \cite{gavrilyuk2022hyperbolic} with
those obtained using the alternative hyperbolization mentioned
in Remark~\ref{rem:other_hyperbolic_approximation} (not discussed here)
indicates that the preservation of the Hamiltonian (relative
equilibrium) structure is crucial, not only to improve the long-time
error growth but also to achieve convergence to the BBM solution
as $\eps \to 0$; note that the convergence analysis in
\cite{giesselmann2025convergence} relies on the energy structure.
Thus, we will focus on developing structure-preserving hyperbolizations
and numerical methods for more complicated dispersive wave models in
future work, e.g., the BBM-BBM system.

\section*{Acknowledgments}

SB and HR were supported by the Deutsche Forschungsgemeinschaft
(DFG, German Research Foundation, project numbers 513301895 and 528753982
as well as within the DFG priority program SPP~2410 with project number 526031774)
and the Daimler und Benz Stiftung (Daimler and Benz foundation,
project number 32-10/22).
DK was supported by funding from King Abdullah University of Science and Technology.
JS was supported by BOF funding of UHasselt.

\section*{Conflict of interest statement}

On behalf of all authors, the corresponding author states that there is no conflict of interest.

\printbibliography

@misc{bleecke2025asymptoticRepro,
  title={Reproducibility repository for
         "{A}symptotic-preserving and energy-conserving methods for a
         hyperbolic approximation of the BBM equation"},
  author={Bleecke, Sebastian and Biswas, Abhijit and Ketcheson, David I.
          and Ranocha, Hendrik and Sch{\"u}tz, Jochen},
  year={2025},
  doi={10.5281/zenodo.17566794},
  howpublished={\url{https://github.com/sbleecke/2025_bbmh}},
}

@article{fernandez2014review,
  title={Review of summation-by-parts operators with simultaneous approximation
         terms for the numerical solution of partial differential equations},
  author={Fern{\'a}ndez, David C Del Rey and Hicken, Jason E and Zingg, David W},
  journal={Computers {\&} Fluids},
  volume={95},
  pages={171--196},
  year={2014},
  publisher={Elsevier},
  doi={10.1016/j.compfluid.2014.02.016}
}

@article{svard2014review,
  title={Review of summation-by-parts schemes for initial-boundary-value problems},
  author={Sv{\"a}rd, Magnus and Nordstr{\"o}m, Jan},
  journal={Journal of Computational Physics},
  volume={268},
  pages={17--38},
  year={2014},
  publisher={Elsevier},
  doi={10.1016/j.jcp.2014.02.031}
}

@online{giesselmann2025convergence,
  title={Convergence of hyperbolic approximations to higher-order {PDEs} for
         smooth solutions},
  author={Giesselmann, Jan and Ranocha, Hendrik},
  year={2025},
  month={08},
  eprint={2508.04112},
  eprinttype={arxiv},
  eprintclass={math.NA}
}

@Article{NoeSch2014,
  Title                    = {Flux Splitting for stiff equations: A notion on stability},
  Author                   = {J. Sch\"utz and S. Noelle},
  Journal                  = {Journal of Scientific Computing},
  Year                     = {2015},
  Number                   = {2},
  Pages                    = {522-540},
  Volume                   = {64}
}

@article{ketcheson2025approximation,
  title={Approximation of arbitrarily high-order {PDEs} by first-order
         hyperbolic relaxation},
  author={Ketcheson, David I and Biswas, Abhijit},
  journal={Nonlinearity},
  volume={38},
  number={5},
  pages={055002},
  year={2025},
  publisher={IOP Publishing},
  doi={10.1088/1361-6544/adc6e8}
}

@article{gavrilyuk2022hyperbolic,
  title={Hyperbolic approximation of the {BBM} equation},
  author={Gavrilyuk, Sergey and Shyue, Keh-Ming},
  journal={Nonlinearity},
  volume={35},
  number={3},
  pages={1447},
  year={2022},
  publisher={IOP Publishing},
  doi={10.1088/1361-6544/ac4c49}
}

@article{boscarino2024asymptotic,
  title={Asymptotic preserving methods for quasilinear hyperbolic systems
         with stiff relaxation: a review},
  author={Boscarino, Sebastiano and Russo, Giovanni},
  journal={SeMA Journal},
  volume={81},
  number={1},
  pages={3--49},
  year={2024},
  publisher={Springer},
  doi={10.1007/s40324-024-00351-x}
}

@article{duran1998numerical,
  title={The numerical integration of relative equilibrium solutions.
         {G}eometric theory},
  author={Dur{\'a}n, Angel and Sanz-Serna, Jesus Maria},
  journal={Nonlinearity},
  volume={11},
  number={6},
  pages={1547},
  year={1998},
  publisher={IOP Publishing},
  doi={10.1088/0951-7715/11/6/008}
}

@article{araujo2001error,
  title={Error propagation in the numerical integration of solitary waves.
         {T}he regularized long wave equation},
  author={Ara{\'u}jo, A and Dur{\'a}n, Angel},
  journal={Applied Numerical Mathematics},
  volume={36},
  number={2-3},
  pages={197--217},
  year={2001},
  publisher={Elsevier},
  doi={10.1016/S0168-9274(99)00148-8}
}

@article{alvarez2012preservation,
  title={On the preservation of invariants in the simulation of solitary
         waves in some nonlinear dispersive equations},
  author={{\'A}lvarez, J and Dur{\'a}n, A},
  journal={Communications in Nonlinear Science and Numerical Simulation},
  volume={17},
  number={2},
  pages={637--649},
  year={2012},
  publisher={Elsevier},
  doi={10.1016/j.cnsns.2011.06.019}
}

@article{ranocha2021broad,
  title={A Broad Class of Conservative Numerical Methods for Dispersive
         Wave Equations},
  author={Ranocha, Hendrik and Mitsotakis, Dimitrios and Ketcheson, David I},
  journal={Communications in Computational Physics},
  year={2021},
  month={02},
  volume={29},
  number={4},
  pages={979--1029},
  publisher={Global Science Press},
  doi={10.4208/cicp.OA-2020-0119},
  eprint={2006.14802},
  eprinttype={arxiv},
  eprintclass={math.NA}
}

@article{favrie2017rapid,
  title={A rapid numerical method for solving {S}erre-{G}reen-{N}aghdi
         equations describing long free surface gravity waves},
  author={Favrie, N and Gavrilyuk, S},
  journal={Nonlinearity},
  volume={30},
  number={7},
  pages={2718},
  year={2017},
  publisher={IOP Publishing},
  doi={10.1088/1361-6544/aa712d}
}

@article{busto2021high,
  title={On high order {ADER} discontinuous {G}alerkin schemes for
         first order hyperbolic reformulations of nonlinear
         dispersive systems},
  author={Busto, Saray and Dumbser, Michael and Escalante, Cipriano
          and Favrie, Nicolas and Gavrilyuk, Sergey},
  journal={Journal of Scientific Computing},
  volume={87},
  number={2},
  pages={48},
  year={2021},
  publisher={Springer},
  doi={10.1007/s10915-021-01429-8}
}

@article{guermond2022hyperbolic,
  title={Hyperbolic relaxation technique for solving the dispersive
         {S}erre-{G}reen-{N}aghdi equations with topography},
  author={Guermond, Jean-Luc and Kees, Chris and Popov, Bojan and
          Tovar, Eric},
  journal={Journal of Computational Physics},
  volume={450},
  pages={110809},
  year={2022},
  publisher={Elsevier},
  doi={10.1016/j.jcp.2021.110809}
}

@article{Ranocha_2021,
   title={On the rate of error growth in time for numerical solutions of nonlinear dispersive wave equations},
   volume={2},
   ISSN={2662-2971},
   url={http://dx.doi.org/10.1007/s42985-021-00126-3},
   DOI={10.1007/s42985-021-00126-3},
   number={6},
   journal={Partial Differential Equations and Applications},
   publisher={Springer Science and Business Media LLC},
   author={Ranocha, Hendrik and de Luna, Manuel Quezada and Ketcheson, David I.},
   year={2021},
   month=oct }

@article{ALVAREZ201439,
title = {Petviashvili type methods for traveling wave computations: I. Analysis of convergence},
journal = {Journal of Computational and Applied Mathematics},
volume = {266},
pages = {39-51},
year = {2014},
issn = {0377-0427},
doi = {https://doi.org/10.1016/j.cam.2014.01.015},
url = {https://www.sciencedirect.com/science/article/pii/S0377042714000363},
author = {J. Álvarez and A. Durán},
keywords = {Petviashvili type methods, Traveling wave generation, Iterative methods for nonlinear systems, Orbital convergence},
abstract = {In this paper a family of fixed point algorithms for the numerical resolution of some systems of nonlinear equations is designed and analyzed. The family introduced here generalizes the Petviashvili method and can be applied to the numerical generation of traveling waves in some nonlinear dispersive systems. Conditions for the local convergence are derived and numerical comparisons between different elements of the family are carried out.}
}

@article{Mattson_2017_SBP,
title = {Diagonal-norm upwind SBP operators},
journal = {Journal of Computational Physics},
volume = {335},
pages = {283-310},
year = {2017},
issn = {0021-9991},
doi = {10.1016/j.jcp.2017.01.042},
url = {https://www.sciencedirect.com/science/article/pii/S002199911730058X},
author = {Ken Mattsson},
keywords = {Finite difference methods, Artificial dissipation, High-order accuracy, Stability, Boundary treatment},
abstract = {High-order accurate first derivative finite difference operators are derived that naturally introduce artificial dissipation. The boundary closures are based on the diagonal-norm summation-by-parts (SBP) framework and the boundary conditions are imposed using a penalty (SAT) technique, to guarantee linear stability for a large class of initial boundary value problems. These novel first derivative SBP operators have a non-central difference stencil in the interior, and come in pairs (for each order of accuracy). The resulting SBP-SAT approximations lead to fully explicit ODE systems. The accuracy and stability properties are demonstrated for linear first- and second-order hyperbolic problems in 1D, and for the compressible Euler equations in 2D. The newly derived first derivative SBP operators lead to significantly more robust and accurate numerical approximations, compared with the exclusive usage of (previously derived central) non-dissipative first derivative SBP operators.}
}

@article{peregrine1966,
  title={Calculations of the development of an undular bore},
  author={Peregrine, D Howell},
  journal={Journal of fluid mechanics},
  volume={25},
  number={2},
  pages={321--330},
  year={1966},
  publisher={Cambridge University Press}
}

@article{BBM1972,
 ISSN = {00804614},
 URL = {http://www.jstor.org/stable/74079},
 author = {T. B. Benjamin and J. L. Bona and J. J. Mahony},
 journal = {Philosophical Transactions of the Royal Society of London. Series A, Mathematical and Physical Sciences},
 number = {1220},
 pages = {47--78},
 publisher = {The Royal Society},
 title = {Model Equations for Long Waves in Nonlinear Dispersive Systems},
 urldate = {2025-04-03},
 volume = {272},
 year = {1972}
}

@article{biswas2025traveling,
  title={Traveling-wave solutions and structure-preserving numerical methods
         for a hyperbolic approximation of the {K}orteweg-de {V}ries equation},
  author={Biswas, Abhijit and Ketcheson, David I. and Ranocha, Hendrik and
          Sch{\"u}tz, Jochen},
  journal={Journal of Scientific Computing},
  volume={103},
  pages={90},
  year={2025},
  month={05},
  doi={10.1007/s10915-025-02898-x},
  eprint={2412.17117},
  eprinttype={arxiv},
  eprintclass={math.NA}
}

@article{Olver_1980,
  title={On the Hamiltonian structure of evolution equations},
  volume={88}, DOI={10.1017/S0305004100057364},
  number={1},
  journal={Mathematical Proceedings of the Cambridge Philosophical Society},
  author={Olver, Peter J.},
  year={1980},
  pages={71–88}
}

@article{Pelinovsky2004,
author = {Pelinovsky, Dmitry E. and Stepanyants, Yury A.},
title = {Convergence of Petviashvili's Iteration Method for Numerical Approximation of Stationary Solutions of Nonlinear Wave Equations},
journal = {SIAM Journal on Numerical Analysis},
volume = {42},
number = {3},
pages = {1110-1127},
year = {2004},
doi = {10.1137/S0036142902414232},

URL = {

        https://doi.org/10.1137/S0036142902414232



},
eprint = {

        https://doi.org/10.1137/S0036142902414232



}
,
    abstract = { We analyze a heuristic numerical method suggested by V.I. Petviashvili in 1976 for approximation of stationary solutions of nonlinear wave equations. The method is used to construct numerically the solitary wave solutions, such as solitons, lumps, and vortices, in a space of one and higher dimensions. Assuming that the stationary solution exists, we find conditions when the iteration method converges to the stationary solution and when the rate of convergence is the fastest. The theory is illustrated with examples of physical interest such as generalized Korteweg--de Vries, Benjamin--Ono, Zakharov--Kuznetsov, Kadomtsev--Petviashvili, and Klein--Gordon equations. }
}

@inproceedings{kreiss1974finite,
  title={Finite Element and Finite Difference Methods for Hyperbolic
         Partial Differential Equations},
  author={Kreiss, Heinz-Otto and Scherer, Godela},
  booktitle={Mathematical Aspects of Finite Elements in Partial
             Differential Equations},
  editor={de Boor, Carl},
  pages={195--212},
  year={1974},
  publisher={Academic Press},
  address={New York}
}

@article{strand1994summation,
  title={Summation by Parts for Finite Difference Approximations for
         {$d/dx$}},
  author={Strand, Bo},
  journal={Journal of Computational Physics},
  volume={110},
  number={1},
  pages={47--67},
  year={1994},
  publisher={Elsevier},
  doi={10.1006/jcph.1994.1005}
}

@article{carpenter1994time,
  title={Time-Stable Boundary Conditions for Finite-Difference Schemes Solving
         Hyperbolic Systems: {M}ethodology and Application to High-Order Compact
         Schemes},
  author={Carpenter, Mark H and Gottlieb, David and Abarbanel, Saul},
  journal={Journal of Computational Physics},
  volume={111},
  number={2},
  pages={220--236},
  year={1994},
  publisher={Elsevier},
  doi={10.1006/jcph.1994.1057}
}

@article{nordstrom2001finite,
  title={Finite volume approximations and strict stability for hyperbolic problems},
  author={Nordstr{\"o}m, Jan and Bj{\"o}rck, Martin},
  journal={Applied Numerical Mathematics},
  volume={38},
  number={3},
  pages={237--255},
  year={2001},
  publisher={Elsevier},
  doi={10.1016/S0168-9274(01)00027-7}
}

@article{hicken2016multidimensional,
  title={Multidimensional Summation-By-Parts Operators:
         {G}eneral Theory and Application to Simplex Elements},
  author={Hicken, Jason E and Fern{\'a}ndez, David C Del Rey and Zingg, David W},
  journal={SIAM Journal on Scientific Computing},
  volume={38},
  number={4},
  pages={A1935--A1958},
  year={2016},
  publisher={Society for Industrial and Applied Mathematics},
  doi={10.1137/15M1038360}
}

@article{hicken2020entropy,
  title={Entropy-stable, high-order summation-by-parts discretizations
         without interface penalties},
  author={Hicken, Jason E},
  journal={Journal of Scientific Computing},
  volume={82},
  number={2},
  pages={50},
  year={2020},
  publisher={Springer},
  doi={10.1007/s10915-020-01154-8}
}

@article{abgrall2020analysisI,
  title={Analysis of the {SBP-SAT} Stabilization for Finite Element Methods
         Part {I}: {L}inear problems},
  author={Abgrall, R{\'e}mi and Nordstr{\"o}m, Jan and {\"O}ffner, Philipp
          and Tokareva, Svetlana},
  journal={Journal of Scientific Computing},
  volume={85},
  number={2},
  pages={1--29},
  year={2020},
  publisher={Springer},
  doi={10.1007/s10915-020-01349-z},
  eprint={1912.08108},
  eprinttype={arxiv},
  eprintclass={math.NA}
}

@article{gassner2013skew,
  title={A Skew-Symmetric Discontinuous {G}alerkin Spectral Element
         Discretization and Its Relation to {SBP}-{SAT} Finite Difference
         Methods},
  author={Gassner, Gregor Josef},
  journal={SIAM Journal on Scientific Computing},
  volume={35},
  number={3},
  pages={A1233--A1253},
  year={2013},
  publisher={Society for Industrial and Applied Mathematics},
  doi={10.1137/120890144}
}

@article{carpenter2014entropy,
  title={Entropy Stable Spectral Collocation Schemes for the
         {N}avier-{S}tokes Equations: {D}iscontinuous Interfaces},
  author={Carpenter, Mark H and Fisher, Travis C and Nielsen, Eric J and
          Frankel, Steven H},
  journal={SIAM Journal on Scientific Computing},
  volume={36},
  number={5},
  pages={B835--B867},
  year={2014},
  publisher={Society for Industrial and Applied Mathematics},
  doi={10.1137/130932193}
}

@inproceedings{huynh2007flux,
  title={A Flux Reconstruction Approach to High-Order Schemes Including
         Discontinuous {G}alerkin Methods},
  author={Huynh, H. T.},
  booktitle={18th AIAA Computational Fluid Dynamics Conference},
  year={2007},
  organization={American Institute of Aeronautics and Astronautics},
  doi={10.2514/6.2007-4079}
}

@article{vincent2011newclass,
  title={A New Class of High-Order Energy Stable Flux Reconstruction Schemes},
  author={Vincent, Peter E and Castonguay, Patrice and Jameson, Antony},
  journal={Journal of Scientific Computing},
  volume={47},
  number={1},
  pages={50--72},
  year={2011},
  publisher={Springer},
  doi={10.1007/s10915-010-9420-z}
}

@article{ranocha2016summation,
  title={Summation-by-parts operators for correction procedure via
         reconstruction},
  author={Ranocha, Hendrik and {\"O}ffner, Philipp and Sonar, Thomas},
  journal={Journal of Computational Physics},
  volume={311},
  pages={299--328},
  year={2016},
  month={04},
  publisher={Elsevier},
  doi={10.1016/j.jcp.2016.02.009},
  eprint={1511.02052},
  eprinttype={arxiv},
  eprintclass={math.NA}
}

@inproceedings{eymann2011active,
  title={Active flux schemes},
  author={Eymann, Timothy and Roe, Philip},
  booktitle={49th AIAA Aerospace Sciences Meeting including the
             New Horizons Forum and Aerospace Exposition},
  pages={382},
  year={2011},
  doi={10.2514/6.2011-382}
}

@online{barsukow2025stability,
  title={Stability of the Active Flux Method in the
         Framework of Summation-by-Parts Operators},
  author={Barsukow, Wasilij and Klingenberg, Christian and
          Lechner, Lisa and Nordstr{\"o}m, Jan and Ortleb, Sigrun
          and Ranocha, Hendrik},
  year={2025},
  month={07},
  eprint={2507.11068},
  eprinttype={arxiv},
  eprintclass={math.NA}
}

@online{hicken2024constructing,
  title={Constructing stable, high-order finite-difference operators
         on point clouds over complex geometries},
  author={Hicken, Jason and Yan, Ge and Kaur, Sharanjeet},
  year={2024},
  month={09},
  eprint={2409.00809},
  eprinttype={arxiv},
  eprintclass={math.NA}
}

@article{sanzserna1982explicit,
  title={An explicit finite-difference scheme with exact conservation
         properties},
  author={Sanz-Serna, Jesus Maria},
  journal={Journal of Computational Physics},
  volume={47},
  number={2},
  pages={199--210},
  year={1982},
  publisher={Elsevier},
  doi={10.1016/0021-9991(82)90074-2}
}

@book{dekker1984stability,
  title={Stability of {R}unge-{K}utta methods for stiff nonlinear
         differential equations},
  author={Dekker, Kees and Verwer, Jan G},
  series={CWI Monographs},
  volume={2},
  year={1984},
  publisher={North-Holland},
  address={Amsterdam}
}

@article{ketcheson2019relaxation,
  title={Relaxation {R}unge-{K}utta Methods: {C}onservation and
         Stability for Inner-Product Norms},
  author={Ketcheson, David I},
  journal={SIAM Journal on Numerical Analysis},
  volume={57},
  number={6},
  pages={2850--2870},
  year={2019},
  publisher={Society for Industrial and Applied Mathematics},
  doi={10.1137/19M1263662},
  eprint={1905.09847},
  eprinttype={arxiv},
  eprintclass={math.NA}
}

@article{ranocha2020relaxation,
  title={Relaxation {R}unge-{K}utta Methods: Fully-Discrete Explicit
         Entropy-Stable Schemes for the Compressible {E}uler and
         {N}avier-{S}tokes Equations},
  author={Ranocha, Hendrik and Sayyari, Mohammed and Dalcin, Lisandro and
          Parsani, Matteo and Ketcheson, David I.},
  journal={SIAM Journal on Scientific Computing},
  volume={42},
  number={2},
  pages={A612--A638},
  year={2020},
  month={03},
  publisher={Society for Industrial and Applied Mathematics},
  doi={10.1137/19M1263480},
  eprint={1905.09129},
  eprinttype={arxiv},
  eprintclass={math.NA}
}

@article{ranocha2020general,
  title={General Relaxation Methods for Initial-Value Problems
         with Application to Multistep Schemes},
  author={Ranocha, Hendrik and L{\'o}czi, Lajos and Ketcheson, David I},
  journal={Numerische Mathematik},
  year={2020},
  month={10},
  volume={146},
  pages={875--906},
  publisher={Springer Nature},
  doi={10.1007/s00211-020-01158-4},
  eprint={2003.03012},
  eprinttype={arxiv},
  eprintclass={math.NA}
}

@article{kang2022entropy,
  title={Entropy-Preserving and Entropy-Stable Relaxation {IMEX} and Multirate
         Time-Stepping Methods},
  author={Kang, Shinhoo and Constantinescu, Emil M},
  journal={Journal of Scientific Computing},
  volume={93},
  pages={23},
  year={2022},
  doi={10.1007/s10915-022-01982-w},
  eprint={2108.08908},
  eprinttype={arxiv},
  eprintclass={math.NA}
}

@article{li2022implicit,
  title={Implicit-explicit relaxation {R}unge-{K}utta methods: construction,
         analysis and applications to {PDE}s},
  author={Li, Dongfang and Li, Xiaoxi and Zhang, Zhimin},
  journal={Mathematics of Computation},
  year={2022},
  publisher={American Mathematical Society},
  doi={10.1090/mcom/3766}
}

@article{ranocha2020relaxationHamiltonian,
  title={Relaxation {R}unge-{K}utta Methods for {H}amiltonian Problems},
  author={Ranocha, Hendrik and Ketcheson, David I},
  journal={Journal of Scientific Computing},
  volume={84},
  number={1},
  year={2020},
  month={07},
  publisher={Springer Nature},
  doi={10.1007/s10915-020-01277-y},
  eprint={2001.04826},
  eprinttype={arxiv},
  eprintclass={math.NA}
}

@article{zhang2020highly,
  title={Highly efficient invariant-conserving explicit {R}unge-{K}utta
         schemes for nonlinear Hamiltonian differential equations},
  author={Zhang, Hong and Qian, Xu and Yan, Jingye and Song, Songhe},
  journal={Journal of Computational Physics},
  pages={109598},
  year={2020},
  publisher={Elsevier},
  doi={10.1016/j.jcp.2020.109598}
}

@article{li2023relaxation,
  author={Li, Dongfang and Li, Xiaoxi},
  title={Relaxation Exponential {R}osenbrock-Type Methods for
         Oscillatory Hamiltonian Systems},
  journal={SIAM Journal on Scientific Computing},
  volume={45},
  number={6},
  pages={A2886-A2911},
  year={2023},
  publisher={},
  doi={10.1137/22M1511345}
}

@inproceedings{yan2020entropy,
  title={Entropy-stable {G}alerkin difference discretization on unstructured
         grids},
  author={Yan, Ge and Kaur, Sharanjeet and Hicken, Jason E and Banks, Jeffrey W},
  booktitle={AIAA AVIATION 2020 FORUM},
  pages={3033},
  year={2020},
  doi={10.2514/6.2020-3033}
}

@article{ranocha2020fully,
  title={Fully-Discrete Explicit Locally Entropy-Stable Schemes for the
         Compressible {E}uler and {N}avier-{S}tokes Equations},
  author={Ranocha, Hendrik and Dalcin, Lisandro and Parsani, Matteo},
  journal={Computers and Mathematics with Applications},
  volume={80},
  number={5},
  pages={1343--1359},
  year={2020},
  month={07},
  publisher={Elsevier},
  doi={10.1016/j.camwa.2020.06.016},
  eprint={2003.08831},
  eprinttype={arxiv},
  eprintclass={math.NA}
}

@online{doehring2025paired,
  title={Paired Explicit Relaxation {R}unge-{K}utta Methods:
         Entropy-Conservative and Entropy-Stable High-Order Optimized
         Multirate Time Integration},
  author={Doehring, Daniel and Ranocha, Hendrik and Torrilhon, Manuel},
  year={2025},
  month={07},
  eprint={2507.04991},
  eprinttype={arxiv},
  eprintclass={math.NA}
}

@article{ranocha2025structure,
  title={Structure-preserving approximations of the {S}erre-{G}reen-{N}aghdi
         equations in standard and hyperbolic form},
  author={Ranocha, Hendrik and Ricchiuto, Mario},
  journal={Numerical Methods for Partial Differential Equations},
  volume={41},
  number={4},
  pages={e70016},
  year={2025},
  month={06},
  doi={10.1002/num.70016},
  eprint={2408.02665},
  eprinttype={arxiv},
  eprintclass={math.NA}
}

@online{lampert2024structure,
  title={Structure-Preserving Numerical Methods for Two Nonlinear Systems of
         Dispersive Wave Equations},
  author={Lampert, Joshua and Ranocha, Hendrik},
  year={2024},
  month={02},
  eprint={2402.16669},
  eprinttype={arxiv},
  eprintclass={math.NA}
}

@article{mitsotakis2021conservative,
  title={A conservative fully-discrete numerical method for the regularized
         shallow water wave equations},
  author={Mitsotakis, Dimitrios and Ranocha, Hendrik and Ketcheson, David I and
          S{\"u}li, Endre},
  journal={SIAM Journal on Scientific Computing},
  year={2021},
  month={04},
  volume={42},
  issue={2},
  pages={B508--B537},
  doi={10.1137/20M1364606},
  eprint={2009.09641},
  eprinttype={arxiv},
  eprintclass={math.NA}
}

@article{virtanen2020scipy,
  title={{SciPy} 1.0: {F}undamental Algorithms for Scientific Computing
         in Python},
  author={{Virtanen}, Pauli and {Gommers}, Ralf and {Oliphant}, Travis E.
          and {Haberland}, Matt and {Reddy}, Tyler and {Cournapeau}, David and
          {Burovski}, Evgeni and {Peterson}, Pearu and {Weckesser}, Warren and
          {Bright}, Jonathan and {van der Walt}, St{\'e}fan J. and
          {Brett}, Matthew and {Wilson}, Joshua and {Jarrod Millman}, K. and
          {Mayorov}, Nikolay and {Nelson}, Andrew R.~J. and {Jones}, Eric and
          {Kern}, Robert and {Larson}, Eric and {Carey}, CJ and
          {Polat}, Ilhan and {Feng}, Yu and {Moore}, Eric W. and
          {VanderPlas}, Jake and {Laxalde}, Denis and {Perktold}, Josef and
          {Cimrman}, Robert and {Henriksen}, Ian and {Quintero}, E.~A. and
          {Harris}, Charles R and {Archibald}, Anne M. and
          {Ribeiro}, Ant{\^o}nio H. and {Pedregosa}, Fabian and
          {van Mulbregt}, Paul and {SciPy 1.0 Contributors}},
  journal={Nature Methods},
  year={2020},
  volume={17},
  pages={261--272},
  doi={10.1038/s41592-019-0686-2},
  eprint={1907.10121},
  eprinttype={arxiv},
  eprintclass={cs.MS}
}

@article{harris2020array,
  title={Array programming with {NumPy}},
  author={Harris, Charles R and Millman, K Jarrod and Van Der Walt, St{\'e}fan J
          and Gommers, Ralf and Virtanen, Pauli and Cournapeau, David and
          Wieser, Eric and Taylor, Julian and Berg, Sebastian and
          Smith, Nathaniel J and Kern, Robert and Picus, Matti and Hoyer, Stephan
          and van Kerkwijk, Marten H and Brett, Matthew and Haldane, Allan and
          Del R{\'\i}o, Jaime Fern{\'a}ndez and Wiebe, Mark and Peterson, Pearu and
          G{\'e}rard-Marchant, Pierre and Sheppard, Kevin and Reddy, Tyler and
          Weckesser, Warren and Abbasi, Hameer and Gohlke, Christoph and
          Oliphant, Travis E},
  journal={Nature},
  volume={585},
  number={7825},
  pages={357--362},
  year={2020},
  publisher={Nature Publishing Group UK London},
  doi={10.1038/s41586-020-2649-2}
}

@article{hunter2007matplotlib,
  title={Matplotlib: {A} {2D} graphics environment},
  author={Hunter, J. D.},
  journal={Computing in Science \& Engineering},
  volume={9},
  number={3},
  pages={90--95},
  year={2007},
  publisher={IEEE Computer Society},
  doi={10.1109/MCSE.2007.55}
}

@article{bezanson2017julia,
  title={Julia: {A} Fresh Approach to Numerical Computing},
  author={Bezanson, Jeff and Edelman, Alan and Karpinski, Stefan and
          Shah, Viral B},
  journal={SIAM Review},
  volume={59},
  number={1},
  pages={65--98},
  year={2017},
  publisher={SIAM},
  eprint={1411.1607},
  eprinttype={arxiv},
  eprintclass={cs.MS},
  doi={10.1137/141000671}
}

@article{ranocha2021sbp,
  title={{SummationByPartsOperators.jl}: {A} {J}ulia library of provably stable
         semidiscretization techniques with mimetic properties},
  author={Ranocha, Hendrik},
  journal={Journal of Open Source Software},
  year={2021},
  month={08},
  doi={10.21105/joss.03454},
  volume={6},
  number={64},
  pages={3454},
  publisher={The Open Journal},
  url={https://github.com/ranocha/SummationByPartsOperators.jl}
}

@article{davis2004umfpack,
  title={Algorithm 832: {UMFPACK} V4.3---an unsymmetric-pattern multifrontal method},
  author={Davis, Timothy A},
  journal={ACM Transactions on Mathematical Software (TOMS)},
  volume={30},
  number={2},
  pages={196--199},
  year={2004},
  publisher={ACM New York, NY, USA},
  doi={10.1145/992200.992206}
}

@article{amestoy2004amd,
  title={Algorithm 837: {AMD}, an approximate minimum degree ordering
         algorithm},
  author={Amestoy, Patrick R and Davis, Timothy A and Duff, Iain S},
  journal={ACM Transactions on Mathematical Software (TOMS)},
  volume={30},
  number={3},
  pages={381--388},
  year={2004},
  publisher={ACM New York, NY, USA},
  doi={10.1145/1024074.1024081}
}

@article{davis2004colamd,
  title={Algorithm 836: {COLAMD}, a column approximate minimum degree
         ordering algorithm},
  author={Davis, Timothy A and Gilbert, John R and Larimore, Stefan I
          and Ng, Esmond G},
  journal={ACM Transactions on Mathematical Software (TOMS)},
  volume={30},
  number={3},
  pages={377--380},
  year={2004},
  publisher={ACM New York, NY, USA},
  doi={10.1145/1024074.1024080}
}

@article{boscarino2013,
author = {Boscarino, Sebastiano and Russo, Giovanni},
title = {Flux-Explicit IMEX Runge--Kutta Schemes for Hyperbolic to Parabolic Relaxation Problems},
journal = {SIAM Journal on Numerical Analysis},
volume = {51},
number = {1},
pages = {163-190},
year = {2013},
doi = {10.1137/110850803},

URL = {

        https://doi.org/10.1137/110850803



},
eprint = {

        https://doi.org/10.1137/110850803



}
,
    abstract = { We consider the development of Implicit-Explicit (IMEX) Runge--Kutta (R-K) schemes for hyperbolic systems of conservation laws with stiff diffusive relaxation. The asymptotic behavior of such systems as the relaxation parameter vanishes is governed by a reduced parabolic system, and it is desirable to have schemes that are able to capture the correct diffusive limit. The hyperbolic part becomes stiff when the system relaxes towards the parabolic equation. For this reason, in previous works part of the hyperbolic terms are treated implicitly [S. Jin, L. Pareschi, and G. Toscani, SIAM J. Numer. Anal., 35 (1998), pp. 2405--2439], [S. Jin and L. Pareschi, J. Comp. Phys., 161 (2000), pp. 312--330], [G. Naldi and L. Pareschi, SIAM J. Numer. Anal., 37 (2000), pp. 1246--1270]. In particular, in [S. Boscarino, L. Pareschi, and G. Russo, Implicit-explicit Runge-Kutta schemes for hyperbolic systems and kinetic equations in the diffusion limit, submitted] the scheme relaxes to an implicit method for the limit diffusive problem, thus avoiding the \$\Delta t \propto \Delta x^2\$ restriction. It would be very desirable to have IMEX schemes which treat the hyperbolic part explicitly, because this allows the use of well-tested space discretization with no modification of the original system. However, the development of such methods presents the difficulty that the characteristic speeds diverge in the diffusive limit making the hyperbolic part very stiff. In this paper we show how to overcome these difficulties with the introduction of particular conditions on the coefficients of the IMEX R-K schemes such that we can treat the stiff component on the hyperbolic part explicitly without any manipulation of the original system. Moreover, the schemes proposed in this paper guarantee a CFL condition independent of the diffusive parameter where a CFL hyperbolic condition in the stiff regime is chosen. Such schemes are shown to have the correct diffusion limit and several numerical results confirm the theoretical analysis. }
}

@article{boscarino2013IMEX,
author = {Boscarino, S. and Pareschi, L. and Russo, G.},
title = {Implicit-Explicit Runge--Kutta Schemes for Hyperbolic Systems and Kinetic Equations in the Diffusion Limit},
journal = {SIAM Journal on Scientific Computing},
volume = {35},
number = {1},
pages = {A22-A51},
year = {2013},
doi = {10.1137/110842855},

URL = {

        https://doi.org/10.1137/110842855



},
eprint = {

        https://doi.org/10.1137/110842855



}
,
    abstract = { We consider implicit-explicit (IMEX) Runge--Kutta (R-K) schemes for hyperbolic systems with stiff relaxation in the so-called diffusion limit. In such a regime the system relaxes towards a convection-diffusion equation. The first objective of this paper is to show that traditional partitioned IMEX R-K schemes will relax to an explicit scheme for the limit equation with no need of modification of the original system. Of course the explicit scheme obtained in the limit suffers from the classical parabolic stability restriction on the time step. The main goal of this paper is to present an approach, based on IMEX R-K schemes, that in the diffusion limit relaxes to an IMEX R-K scheme for the convection-diffusion equation, in which the diffusion is treated implicitly. This is achieved by a novel reformulation of the problem, and subsequent application of IMEX R-K schemes to it. An analysis of such schemes to the reformulated problem shows that the schemes reduce to IMEX R-K schemes for the limit equation, under the same conditions derived for hyperbolic relaxation [S. Boscarino and G. Russo, SIAM J. Sci. Comput., 31 (2009), pp. 1926--1945]. Several numerical examples including neutron transport equations confirm the theoretical analysis. }
}

@article{Pareschi2005,
  author    = {Lorenzo Pareschi and Giovanni Russo},
  title     = {Implicit--Explicit Runge--Kutta Schemes and Applications to Hyperbolic Systems with Relaxation},
  journal   = {Journal of Scientific Computing},
  volume    = {25},
  number    = {1},
  pages     = {129--155},
  year      = {2005},
  month     = {October},
  doi       = {10.1007/s10915-004-4636-4},
  url       = {https://doi.org/10.1007/s10915-004-4636-4},
  issn      = {1573-7691}
}

@article{ASCHER1997151,
title = {Implicit-explicit Runge-Kutta methods for time-dependent partial differential equations},
journal = {Applied Numerical Mathematics},
volume = {25},
number = {2},
pages = {151-167},
year = {1997},
note = {Special Issue on Time Integration},
issn = {0168-9274},
doi = {https://doi.org/10.1016/S0168-9274(97)00056-1},
url = {https://www.sciencedirect.com/science/article/pii/S0168927497000561},
author = {Uri M. Ascher and Steven J. Ruuth and Raymond J. Spiteri},
abstract = {Implicit-explicit (IMEX) linear multistep time-discretization schemes for partial differential equations have proved useful in many applications. However, they tend to have undesirable time-step restrictions when applied to convection-diffusion problems, unless diffusion strongly dominates and an appropriate BDF-based scheme is selected (Ascher et al., 1995). In this paper, we develop Runge-Kutta-based IMEX schemes that have better stability regions than the best known IMEX multistep schemes over a wide parameter range.}
}

\end{document}